\documentclass[11pt]{amsart} \textwidth=14.5cm \oddsidemargin=1cm
\usepackage{amsmath,wasysym}
\usepackage{amsxtra}
\usepackage{amscd}
\usepackage{amsthm}
\usepackage{amsfonts}
\usepackage{amssymb}
\usepackage{arydshln}
\usepackage{eucal}
\usepackage{graphics, color}
\usepackage{hyperref,mathrsfs}
\usepackage[usenames,dvipsnames]{xcolor}
\usepackage{tikz}
\usepackage{stmaryrd}
\usepackage[all]{xy}
\usepackage{hyperref}
\usepackage{color}
\usepackage{bbm} 
\allowdisplaybreaks

\hypersetup{colorlinks,linkcolor=blue, urlcolor=blue,
citecolor=blue}

\newtheorem{thm}{Theorem} [section]
\newtheorem{lem}[thm]{Lemma}
\newtheorem{cor}[thm]{Corollary}
\newtheorem{prop}[thm]{Proposition}

\newtheorem{rmk}[thm]{Remark}

\theoremstyle{definition}
\newtheorem{definition}[thm]{Definition}
\newtheorem{example}[thm]{Example}

\newcommand{\eps}{\varepsilon}
\newcommand{\A}{\mathcal A}
\newcommand{\bC}{\mathbb C}
\newcommand{\bN}{\mathbb N}
\newcommand{\bZ}{\mathbb Z}
\newcommand{\bQ}{\mathbb Q}

\newcommand{\lrs}[1]{\left[\!\!\left[{#1}\right]\!\!\right]}
\newcommand{\N}{\mathbb{N}}
\newcommand{\qS}{{\boldsymbol{\mathcal{S}}_q(n,r)}}
\newcommand{\qSK}{\boldsymbol{\mathcal{S}}_{q,K}(n,r)}
\newcommand{\qSL}{\boldsymbol{\mathcal{S}}_{q,L}(n,r)}
\newcommand{\qSj}{{\boldsymbol{\mathcal{S}}^\jmath(n,r)}}
\newcommand{\qSi}{{\boldsymbol{\mathcal{S}}^\imath(n,r)}}
\newcommand{\Q}{\mathbb{Q}}
\newcommand{\up}{\upsilon}
\newcommand{\vars}{\varsigma}
\newcommand{\Z}{\mathbb{Z}}
\newcommand{\tTheta}{\Theta^{\jmath}}
\newcommand{\tOmega}{\Omega^{\jmath}}
\newcommand{\tM}{\widetilde M}

\newcommand{\ro}{\operatorname{ro}}
\newcommand{\co}{\operatorname{co}}
\newcommand{\row}{\operatorname{row}}
\newcommand{\col}{\operatorname{col}}
\newcommand{\Mat}{\operatorname{Mat}}
\newcommand{\diag}{\operatorname{diag}}
\newcommand{\End}{\operatorname{End}}
\newcommand{\spanK}{\operatorname{Span}_{K}}
\newcommand{\rank}{\operatorname{rank}}
\newcommand{\rref}{\operatorname{rref}}
\newcommand{\Tr}{\operatorname{Tr}}
\newcommand{\Id}{\operatorname{Id}}
\numberwithin{equation}{section}

\title[Centers of $q$-Schur algebras]
{Centers of $q$-Schur algebras}

\author[Jian Chen]{Jian Chen}
\address{School of Mathematical Sciences,
	University of Science and Technology of China,
	Hefei 230026, China}
\email{JianChen96@outlook.com (Chen)}

\author[Xirui Yu]{Xirui Yu}
\address{School of Mathematical Sciences,
    East China Normal University, Shanghai 200241, China}
\email{52280155007@stu.ecnu.edu.cn (Yu)}

\begin{document}

\begin{abstract}

We give a direct BLM-coordinate description of the centers of generic $q$-Schur algebras for type $A$ and type $B$. For the $q$-Schur algebras $\qS$ of type $A$, we show that any central element is supported on some matrices satisfying certain balance properties, and that its coefficients are precisely the solutions of an explicit finite linear system derived from the left and right multiplication formulas for the Chevalley generators of $\qS$. 
This yields an effective reduced row echelon form construction of a $\Q(\upsilon)$-basis of the center, a basis that we then compare with the central bases obtained by Fu via quantum Schur-Weyl duality. 
For the $q$-Schur algebras $\qSj$ of type $B$, we first apply a method similar to that for type $A$ to provide a basis for the center. In addition, we provide another structural description of the center by decomposing $\qSj$ into the $q$-Schur algebras of type $A$, and these two descriptions are compatible. The center of a variant ${\boldsymbol{\mathcal{S}}^\imath(n,r)}$ of $\qSj$ is also discussed.

\end{abstract}

\maketitle
\setcounter{tocdepth}{2}
\tableofcontents
%

\section{Introduction}
\subsection{Background and motivation} 

The $q$-Schur algebra, a quantum analogue of the Schur algebra, is one of the standard intersections of Hecke algebras, quantum groups, and Schur-Weyl duality. In type $A$, it can be realized as an endomorphism algebra for permutation modules over the Hecke algebra of the symmetric group \cite{DJ89,Jim86}. Donkin's work \cite{Don98} develops its representation theory, while Jimbo's quantum Schur-Weyl duality explains its relationship with quantum enveloping algebras \cite{Jim86}. A geometric realization, due to Beilinson, Lusztig and MacPherson (abbr. BLM), constructs these $q$-Schur algebras through convolution algebras and gives a normalized basis indexed by some matrices \cite{BLM90}. 
We shall use this normalized BLM basis throughout, which is convenient for explicit calculations, since multiplication by the Chevalley generators obeys formulas that shift entries between adjacent rows or columns. The (type $A$) algebraic and geometric constructions are surveyed in \cite{DDPW08}, and related BLM-type methods have been developed in \cite{DF10,DDF12,BW18,BKLW18,FLLLW20,LL21,DW22,DW22b,FLLLW23,LY26}.

Nowadays there have been various generalizations of Schur algebras and $q$-Schur algebras in the literature. Donkin \cite{Do86,Do87} formulated a family of generalized Schur algebras associated with a reductive group $G$ of arbitrary type, whose quantum analogues were studied by Doty \cite{Dot03}. 
The theory of $q$-Schur algebras of type~$B$ has developed along several overlapping lines. A version of $q$-Schur algebras as centralizer algebras of type~$B$ Hecke modules was formulated independently by Dipper–James–Mathas and Du–Scott \cite{DJM98,DS00}. 
Green introduced the hyperoctahedral Schur algebra \cite{Gre97} as a type‑$B$ analogue, which later became part of the broader family of Schur-type algebras arising from quantum symmetric pairs. 
Bao, Kujawa, Li, and Wang supplied a BLM-type geometric Schur duality in classical type \cite{BKLW18}. Lai and Luo treated unequal parameters \cite{LL21}. Lai, Nakano, and Xiang then established the structure and representation theory for $q$-Schur algebras of type~$B$. 
Luo and Wang placed these constructions in a uniform framework for $q$-Schur algebras and Schur dualities of arbitrary finite type \cite{LW22}.

The center of a finite-dimensional algebra is a fundamental invariant. In the semisimple case, it records the block idempotents; in more general situations, it still measures a substantial part of the algebra's internal symmetry. Centers of Schur-type algebras have therefore been studied by several methods. Geetha and Prasad described the center of the classical Schur algebra via Schur-Weyl duality \cite{GP16}. In the quantum case, Fu proved that the natural Schur-Weyl maps preserve centers and used bases of the Hecke center to construct bases of the center of $q$-Schur algebras \cite{Fu21}. The center of $q$-Schur algebra $\mathbf{U}(2,r)$ in its normalized BLM basis is obtained in \cite{GL21} (see \cite{LSY25} for super case $\textup{S}(1|1,r)$). Recently, the center of the affine $q$-Schur algebra was determined by means of equivariant K-theory in \cite{LXY26}. These works motivate the problem considered here: to describe the center directly in normalized BLM basis for arbitrary $n$ and $r$.

\subsection{Main idea and results}

This paper takes that problem as its starting point. Since $q$-Schur algebras are generated by the Chevalley elements and the diagonal idempotents, it is enough to test centrality against these generators. The diagonal idempotents first force a support condition: a central element can be supported only on balanced matrices, namely matrices whose row sum vector equals their column sum vector (cf. Lemma \ref{lem:balanced-support} for $\qS$ \& Lemma \ref{lem:balancedB} for $\qSj$). 
Once this condition is imposed, the BLM multiplication formulas for the Chevalley generators turn the remaining commutator equations into a finite linear system (cf. \eqref{eq:matrix-system}).

After fixing an order on the balanced matrices and an order on the equations, the system has the form $Mx=0$. The map
$$x=(x_A)\longmapsto \sum_A x_A[A]$$
identifies $\ker M$ with the center (cf. \eqref{eq:kernel-center-isomorphism} \& \eqref{eq:RREFbasisB}). Reducing $M$ to reduced row echelon form (abrr. RREF) then gives a canonical basis once the chosen column order is fixed: each free column is set to one in turn, and all the other free variables are set to zero. We call the resulting central elements the RREF basis (cf. Theorem \ref{cor:rref-basis} \& \ref{cor:RREFbasisB}). This construction is algorithmic, while also providing a useful conceptual verification of the computed object.

We also compare the RREF basis with the central bases constructed by Fu from the center of the Hecke algebra \cite{Fu21}, which bases include variants related to the Geck-Rouquier basis \cite{GR97} and to Jones' relative norm basis \cite{Jo90}. Once the transported Hecke central elements are written in normalized BLM basis, the transition matrix from Fu's basis to our  RREF basis is obtained (cf. \eqref{eq:general-transition-matrix-G},  \eqref{eq:general-transition-matrix-J}, and Example \ref{ex:fu-32}). This gives a direct way to compare the two descriptions and makes clear where our RREF construction is simply a coordinate form of the same center, rather than a replacement for the Schur-Weyl approach. 

For type $B$, we carry out a similar linear-system construction for the $q$-Schur algebra $\qSj$. There is also an independent structural input: Lai, Nakano and Xiang proved that $\qSj$ decomposes as a direct sum of tensor products of $q$-Schur algebras of type $A$ \cite{LNX22}, accordingly, pulling back tensor products of type‑$A$ central bases yields another basis for the type‑$B$ center. (cf. Theorem \ref{thm:centerBviaIso}). The two constructions are compatible. 

The point of the paper is not to supersede the existing conceptual constructions of central bases. Rather, it supplies a transparent BLM basis realization of the center and an exact linear-algebraic procedure that is well suited for computation. This viewpoint also separates the generic statements over $\Q(\up)$ from the integral questions over $\Z[\up,\up^{-1}]$, which require additional arguments.

\subsection{Organization}

The paper is organized as follows. Section \ref{sec:Atype} treats type $A$. We recall the normalized BLM basis and the multiplication formulas for $\qS$ used in the paper, derive the central linear system, construct the RREF basis, and record the dimension and primitive central idempotent formulas. We then compare the RREF basis with central bases transported from the Hecke center. Section \ref{sec:Btype} treats type $B$. After recalling the $q$-Schur algebra of type $B$ and its multiplication formulas, we repeat the centrality argument. We then use the decomposition of $\qSj$ into type $A$ $q$-Schur algebras to obtain a tensor-product description of the center. The center of a variant ${\boldsymbol{\mathcal{S}}^\imath(n,r)}$ of $\qSj$ is discussed.

\subsection*{Notational conventions}

Throughout the paper, let $\upsilon$ be an indeterminate and put
$$ q=\upsilon^2, \qquad \A=\bZ[\upsilon,\upsilon^{-1}], \qquad K=\bQ(\upsilon).$$
The bar involution of $\A$ is an isomorphism  determined by $\overline \up=\up^{-1}$. For $a\in\bZ$, set
$$
\lrs{a}=\frac{\up^{2a}-1}{\up^{2}-1}\in\mathcal A.$$

\subsection*{Acknowledgment}
This paper is partially supported by the National Key R\&D Program of China (2024YFA1013802) and the Innovation Program for Quantum Science and Technology (2021ZD0302902).

\section{Center of $q$-Schur algebras of type $A$}\label{sec:Atype}
\subsection{The $q$-Schur algebras and multiplication formulas of type $A$}\label{sec:qschur-blm}
We first recall the definition of the $q$-Schur algebras and the multiplication formulas that will be used below. (cf. \cite{BLM90,DDPW08}).

Let $\mathcal H_{\mathcal A}(r)$ be the Hecke algebra of the symmetric group $\mathfrak S_r=\langle s_i\mid 1\leq i<r\rangle$ over $\A$. It is generated by $T_i~(1\leq i<r)$, subject to
$$T_i^2=(q-1)T_i+q,\quad
  T_iT_{i+1}T_i=T_{i+1}T_iT_{i+1}~(1\leq i<r-1),\quad
  T_iT_j=T_jT_i~(|i-j|>1).$$

Let $\N=\{0,1,2,\ldots\}$ and denote
\begin{align*}
    &\Lambda(n,r)=\{\lambda=(\lambda_1,\ldots,\lambda_n)\in\N^n\mid |\lambda|=\sum_i\lambda_i=r\},\\
    &\Lambda^{+}(n,r)=\{\lambda\in\Lambda(n,r)\mid \lambda_1\geq\cdots\geq\lambda_n\}.
\end{align*}
For $\lambda\in\Lambda(n,r)$, let $\mathfrak S_\lambda$ be the corresponding Young subgroup and put $x_\lambda=\sum_{w\in\mathfrak S_\lambda}T_w$. 
The integral $q$-Schur algebra of type $A$ over $\mathcal A$ is
$$\mathcal{S}_{q,\A}(n,r)=\End_{\mathcal H_{\mathcal A}(r)}
\Big(\bigoplus_{\lambda\in\Lambda(n,r)}x_\lambda\mathcal H_{\mathcal A}(r)\Big),$$
and let
$$\boldsymbol{\mathcal{S}}_{q,K}(n,r)=K\otimes_{\mathcal A}\mathcal{S}_{q,\A}(n,r).$$
We simply write ${\mathcal{S}}_{q}(n,r)={\mathcal{S}}_{q,\A}(n,r)$ and $\qS=\boldsymbol{\mathcal{S}}_{q,K}(n,r)$. 
Denote by $Z(\mathcal{S}_{q}(n,r))$ (resp., $Z(\qS)$) the center of the $q$-Schur algebra $\mathcal{S}_{q}(n,r)$ (resp., $\qS$).

Let
$$\Theta(n,r)=\Big\{A=(a_{ij})\in\Mat_n(\N)\mid |A|=\sum_{i,j}a_{ij}=r\Big\}.$$
For $A\in\Theta(n,r)$, we define its row/column sum vectors by
$$\ro(A)=\Big(\sum_j a_{1j},\ldots,\sum_j a_{nj}\Big),\qquad
  \co(A)=\Big(\sum_i a_{i1},\ldots,\sum_i a_{in}\Big).$$

In the following we collect some results from \cite{BLM90}; see also \cite{DDPW08}.
\begin{lem}\label{lem:blm-basis}
The algebra $\mathcal S_{q}(n,r)$ is a free $\mathcal A$-module with a normalized basis
$$\{[A]\mid A\in\Theta(n,r)\}.$$
There is an algebra anti-automorphism $\vars:\mathcal S_{q}(n,r)\to \mathcal S_{q}(n,r)$ such that $\vars([A])=[A^t]$, where $A^t$ is the transpose of $A$. Moreover, for $\lambda\in\Lambda(n,r)$ and $A\in\Theta(n,r)$, we have
$$[\diag(\lambda)][A]=
\begin{cases}
[A],& \lambda=\ro(A),\\
0,& \text{otherwise},
\end{cases}
\qquad
[A][\diag(\lambda)]=
\begin{cases}
[A],& \lambda=\co(A),\\
0,& \text{otherwise}.
\end{cases}$$
\end{lem}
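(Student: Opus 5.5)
The statement is a collection of standard facts from \cite{BLM90} and \cite{DDPW08}, so the plan is to rebuild them from the Hecke-algebraic definition of $\mathcal S_q(n,r)$. First I would set up the double coset description of the basis. For $\lambda,\mu\in\Lambda(n,r)$, let $\mathcal D_{\lambda\mu}$ be the set of distinguished $(\mathfrak S_\lambda,\mathfrak S_\mu)$-double coset representatives. The standard bijection
$$\{(\lambda,d,\mu)\mid d\in\mathcal D_{\lambda\mu}\}\longleftrightarrow\Theta(n,r),\qquad (\lambda,d,\mu)\mapsto A,\quad a_{ij}=|R_i^\lambda\cap dR_j^\mu|,$$
identifies $\lambda=\ro(A)$ and $\mu=\co(A)$. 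Here $R_i^\lambda$ is the $i$th block of $\{1,\dots,r\}$ determined by $\lambda$. To each $A$ I attach the endomorphism $\phi_A$. It is zero on $x_\nu\mathcal H$ for $\nu\neq\mu$, and it sends $x_\mu$ to $\sum_{w\in\mathfrak S_\lambda d\mathfrak S_\mu}T_w$. This element lies in $x_\lambda\mathcal H\cap\mathcal H x_\mu$. Then I set $[A]=\up^{-d_A}\phi_A$, where $d_A$ is the usual BLM normalization (the dimension of the orbit minus that of the fibre).

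Freeness then follows from the standard description of $\operatorname{Hom}_{\mathcal H}(x_\mu\mathcal H,x_\lambda\mathcal H)\cong x_\lambda\mathcal H\cap\mathcal H x_\mu$. By Dipper--James, this intersection is free over $\A$ with basis given by the double coset sums. The step needing most care is exactly this Dipper--James basis theorem. I would prove it by writing an element of $x_\lambda\mathcal H\cap\mathcal Hx_\mu$ in the $T_w$-basis. Invariance on both sides, i.e.\ $T_sh=qh$ for $s\in\mathfrak S_\lambda$ and $hT_t=qh$ for $t\in\mathfrak S_\mu$, forces the coefficients to be constant on each double coset, via the standard length-additivity $\ell(udv)=\ell(u)+\ell(d)+\ell(v)$ for distinguished $d$. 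Multiplying by the unit $\up^{-d_A}$ does not affect freeness.

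The idempotent formulas come out directly. For $\diag(\lambda)$ we have $d=1$ and $\lambda=\mu$, so $\phi_{\diag(\lambda)}$ is the projection onto the summand $x_\lambda\mathcal H$ and $d_{\diag(\lambda)}=0$. Since $\phi_A$ maps $x_{\co(A)}\mathcal H$ into $x_{\ro(A)}\mathcal H$ and kills the other summands, composing with these projections gives the stated cases.

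For $\vars$, I would use the anti-involution $\tau\colon T_w\mapsto T_{w^{-1}}$ of $\mathcal H$. It fixes each $x_\lambda$. Via the identification $\operatorname{Hom}(x_\mu\mathcal H,x_\lambda\mathcal H)\cong x_\lambda\mathcal H\cap\mathcal Hx_\mu$, it induces an anti-automorphism of $\mathcal S_q(n,r)$. Checking that this is anti-multiplicative under the intersection identification is where some bookkeeping is needed: composition corresponds to $h\cdot h'$ divided appropriately by the relevant Poincar\'e factors, and $\tau$ reverses this. The map $\tau$ sends the double coset sum for $\mathfrak S_\lambda d\mathfrak S_\mu$ to the one for $\mathfrak S_\mu d^{-1}\mathfrak S_\lambda$, which corresponds to $A^t$. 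Finally, $d_{A}=d_{A^t}$ by the symmetric formula
$$d_A=\sum_{i\ge k,\,j<l}a_{ij}a_{kl},$$
so $\vars([A])=[A^t]$.
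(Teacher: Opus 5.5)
The Dipper--James construction of the basis and the idempotent relations is fine. The paper gives no argument here; it simply quotes these facts from \cite{BLM90,DDPW08}. The step for $\vars$, however, fails: $d_A=\sum_{i\ge k,\,j<l}a_{ij}a_{kl}$ is \emph{not} transpose-invariant. For $A=\begin{pmatrix}1&1\\0&0\end{pmatrix}$ you get $d_A=a_{11}a_{12}=1$, but $d_{A^t}=0$.

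In your Hecke language, $d_A=\ell(d^+)-\ell(w_{0,\co(A)})$. Here $d^+$ is the longest element of $\mathfrak S_\lambda d\mathfrak S_\mu$, and $w_{0,\mu}$ is the longest element of $\mathfrak S_\mu$. The term $\ell(d^+)$ is unchanged under $d\mapsto d^{-1}$, but the subtracted term is not. Splitting off the $i=k$ terms of $d_A$ and the $j=l$ terms of $d_{A^t}$ gives in general
$$d_A-d_{A^t}=\tfrac12\Big(\sum_i\lambda_i^2-\sum_j\mu_j^2\Big)=\ell(w_{0,\lambda})-\ell(w_{0,\mu}),\qquad \lambda=\ro(A),\ \mu=\co(A).$$
So your anti-automorphism $\tau'$ induced by $T_w\mapsto T_{w^{-1}}$ does satisfy $\tau'(\phi_A)=\phi_{A^t}$, which you verified correctly. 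But it gives $\tau'([A])=\up^{d_{A^t}-d_A}[A^t]$, not $[A^t]$.

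The missing idea is that this discrepancy has the form $a(\ro(A))-a(\co(A))$, so an inner automorphism removes it. Put $g=\sum_{\lambda\in\Lambda(n,r)}\up^{\ell(w_{0,\lambda})}[\diag(\lambda)]$. This is a unit of $\mathcal S_q(n,r)$ with inverse $\sum_\lambda\up^{-\ell(w_{0,\lambda})}[\diag(\lambda)]$. Your idempotent formulas give $g[A]g^{-1}=\up^{\ell(w_{0,\ro(A)})-\ell(w_{0,\co(A)})}[A]$. Then $\vars:=\tau'\circ\operatorname{Ad}_g$ is an anti-automorphism with $\vars([A])=[A^t]$. The same correction is needed geometrically: transposition of functions on pairs of flags is anti-multiplicative for convolution, but $d_A\neq d_{A^t}$.

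A smaller point: $\ell(udv)=\ell(u)+\ell(d)+\ell(v)$ does not hold for all $u\in\mathfrak S_\lambda$, $v\in\mathfrak S_\mu$. You do not need it, though. Constancy of coefficients on double cosets follows from $T_sh=qh$ and $hT_t=qh$, one simple reflection at a time.
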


Let $A\in\Mat_n(\N)$ have zero diagonal and let $\boldsymbol{j}=(j_1,\ldots,j_n)\in\Z^n$. Define
\begin{equation*}\label{eq:AjrA}
    A(\boldsymbol{j},r)=
\begin{cases}
\displaystyle\sum_{\lambda\in\Lambda(n,r-|A|)}\up^{\sum_i\lambda_i j_i}[A+\diag(\lambda)],& |A|\leq r,\\[6pt]
0,& |A|>r.
\end{cases}
\end{equation*}
For $1\leq h<n$ and $1\leq i\leq n$, write
$$e_h=E_{h,h+1}(\boldsymbol{0},r),\qquad
  f_h=E_{h+1,h}(\boldsymbol{0},r),\qquad
  d_i=O(\eps_i,r),$$
where $E_{ij}$ is the $n\times n$ matrix unit, $O$ is the zero matrix, and $\eps_i=(0,\cdots,0,\underset{(i)}{1},0,\cdots,0)$ is the $i$-th standard basis vector of $\Z^n$.
\begin{lem}\label{lem:generators}(\cite[Theorem 13.31]{DDPW08})
The $q$-Schur algebra $\qS$ is generated by $e_h$, $f_h$ for $1\leq h<n$ and by $d_i$ for $1\leq i\leq n$.
\end{lem}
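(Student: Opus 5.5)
The plan is to prove Lemma~\ref{lem:generators} by the standard BLM triangularity argument. Let $\mathcal U$ be the subalgebra of $\qS$ generated by the $e_h$, $f_h$ and $d_i$. We will show that every basis element $[A]$, $A\in\Theta(n,r)$, lies in $\mathcal U$. The argument has three steps.

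\emph{Step 1: the idempotents.} We first show that the diagonal idempotents $[\diag(\lambda)]$ lie in $\mathcal U$. By Lemma~\ref{lem:blm-basis}, the elements $[\diag(\lambda)]$ for $\lambda\in\Lambda(n,r)$ are pairwise orthogonal idempotents summing to $1$. By definition,
$$d_i=\sum_{\lambda\in\Lambda(n,r)}\up^{\lambda_i}[\diag(\lambda)],$$
so the $d_i$ act on $\bigoplus_\lambda K[\diag(\lambda)]$ by the characters $\lambda\mapsto\up^{\lambda_i}$. Distinct $\lambda$ give distinct joint eigenvalue tuples $(\up^{\lambda_1},\ldots,\up^{\lambda_n})$, and these are separated over $K=\Q(\up)$. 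Hence Lagrange interpolation gives, for each $\lambda$,
$$[\diag(\lambda)]=\prod_i\prod_{\mu_i\neq\lambda_i}\frac{d_i-\up^{\mu_i}}{\up^{\lambda_i}-\up^{\mu_i}},$$
where $\mu_i$ runs over $0,\ldots,r$. Thus every $[\diag(\lambda)]$ lies in $\mathcal U$.

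\emph{Step 2: divided powers.} Over $K$, the divided powers $e_h^{(m)}=e_h^m/[m]^!$ and $f_h^{(m)}$ also lie in $\mathcal U$. We multiply them by idempotents on the right.

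\emph{Step 3: triangularity.} We order $\Theta(n,r)$ by the BLM partial order $\preceq$, refined by degree $\sum_{i<j}(j-i)(a_{ij}+a_{ji})$. For a given $A$, we form a monomial
$$\mathfrak m_A=\prod_{\text{upper}}e_h^{(a)}\cdot[\diag(\lambda)]\cdot\prod_{\text{lower}}f_h^{(b)}$$
in a fixed ordering adapted to the off-diagonal entries, following \cite[Ch.~13]{DDPW08} or \cite[\S3]{BLM90}. The key claim, proved from the multiplication formulas of BLM, is that
$$\mathfrak m_A=[A]+\sum_{B\prec A}c_{B}[B].$$
Given this, induction on $\preceq$ puts every $[A]$ in $\mathcal U$, since $\mathfrak m_A\in\mathcal U$ by Steps 1 and 2.

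The main obstacle is this triangularity. It requires tracking the leading term of iterated products of the generators against $[A+\diag]$, using the BLM formulas for $e_h[A]$ and $f_h[A]$. Each such formula yields a combination of $[A+E_{h,j}-E_{h+1,j}]$ whose coefficients are powers of $\up$ times quantum integers, and in the right ordering the leading one has coefficient $1$. Since the paper cites \cite[Theorem 13.31]{DDPW08}, I would invoke that theorem rather than redo this bookkeeping.
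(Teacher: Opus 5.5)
Your proposal is correct and rests on the same foundation as the paper, which gives no argument of its own and simply cites \cite[Theorem 13.31]{DDPW08}. Your Steps 1 and 2 are valid reductions: you recover each $[\diag(\lambda)]$ from the $d_i$ by Lagrange interpolation over $K$, and you note that $e_h^{(m)}=e_h^m/[m]^!$ lies in the generated subalgebra. The load-bearing triangularity $\mathfrak m_A=[A]+\sum_{B\prec A}c_B[B]$ in Step 3 is exactly the BLM result that the cited theorem packages, and since you work over $K$ you only need its leading coefficient to be nonzero, so the exact normalization is immaterial.
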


For $A=(a_{ij})\in\Theta(n,r)$, $1\leq h< n$ and $1\leq p\leq n$, define
\begin{equation}\label{eq:betaB}
  \beta_p(A,h)=\sum_{j\geq p}a_{h,j}-\sum_{j>p}a_{h+1,j},
  \qquad
  \beta'_p(A,h)=\sum_{j\leq p}a_{h+1,j}-\sum_{j<p}a_{h,j}.
\end{equation}

By \cite[Lemma 3.4]{BLM90} and Lemma \ref{lem:blm-basis}, we have the following results. 

\begin{lem}\label{lem:blm-multiplication}
Let $A=(a_{ij})\in\Theta(n,r)$ and $1\leq h<n$. The following multiplication formulas hold in $\qS$,
\begin{align}
 e_h[A]
 &=\sum_{\substack{1\leq u\leq n\\ a_{h+1,u}>0}}
 L^+_{h,u}(A)[A+E_{h,u}-E_{h+1,u}],\label{eq:left-e}\\
 [A]e_h
 &=\sum_{\substack{1\leq u\leq n\\ a_{u,h}>0}}
 R^+_{h,u}(A)[A-E_{u,h}+E_{u,h+1}],\label{eq:right-e}\\
 f_h[A]
 &=\sum_{\substack{1\leq u\leq n\\ a_{h,u}>0}}
 L^-_{h,u}(A)[A-E_{h,u}+E_{h+1,u}],\label{eq:left-f}\\
 [A]f_h
 &=\sum_{\substack{1\leq u\leq n\\ a_{u,h+1}>0}}
 R^-_{h,u}(A)[A+E_{u,h}-E_{u,h+1}],\label{eq:right-f}
\end{align}
where
\begin{align*}
    &L^+_{h,u}(A)=\upsilon^{\beta_u(A,h)}\overline{\lrs{a_{h,u}+1}},
  \qquad
   L^-_{h,u}(A)=\upsilon^{\beta'_u(A,h)}\overline{\lrs{a_{h+1,u}+1}},\\
   & R^+_{h,u}(A)=\upsilon^{\beta'_u(A^t,h)}\overline{\lrs{a_{u,h+1}+1}},
  \qquad
   R^-_{h,u}(A)=\upsilon^{\beta_u(A^t,h)}\overline{\lrs{a_{u,h}+1}}.
\end{align*}
\end{lem}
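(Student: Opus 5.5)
The statement collects the four formulas obtained from \cite[Lemma 3.4]{BLM90}. The plan is to take the two left formulas \eqref{eq:left-e} and \eqref{eq:left-f} directly from BLM, and to deduce the two right formulas from them using the anti-automorphism $\vars$ of Lemma \ref{lem:blm-basis}.

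\emph{Left multiplication.} BLM's Lemma 3.4 expresses $[B][A]$ when $B-E_{h,h+1}$ or $B-E_{h+1,h}$ is diagonal and $\co(B)=\ro(A)$. By definition $e_h=\sum_\lambda[E_{h,h+1}+\diag(\lambda)]$. By Lemma \ref{lem:blm-basis}, every summand other than the one with $\co(E_{h,h+1}+\diag(\lambda))=\ro(A)$ acts by zero, so $e_h[A]=[B][A]$ for a single such $B$; if no such $B$ exists, then $a_{h+1,\cdot}=0$ and both sides vanish. Substituting BLM's formula gives $e_h[A]=\sum_u \up^{\beta_u(A,h)}\overline{\lrs{a_{h,u}+1}}[A+E_{h,u}-E_{h+1,u}]$, where the sum runs over $u$ with $a_{h+1,u}>0$. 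The only thing to check is that BLM's exponent, written with their conventions, agrees with $\beta_u(A,h)$. Their $v$-power is $\sum_{j>u}a_{h,j}-\sum_{j>u}a_{h+1,j}$ together with the normalization shift coming from $[\,\cdot\,]=v^{-d_A}\{\cdot\}$, and $\overline{\lrs{a+1}}$ absorbs a factor $v^{-2a}$. The formula for $f_h[A]$ follows in the same way.

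\emph{Right multiplication via $\vars$.} Since $\vars(e_h)=\sum_\lambda[E_{h+1,h}+\diag(\lambda)]=f_h$ and $\vars$ is an anti-automorphism, applying $\vars$ to $f_h[A^t]$ gives $[A]e_h=\vars(f_h[A^t])$. Using \eqref{eq:left-f} with $A^t$ in place of $A$, the condition $(A^t)_{h,u}>0$ becomes $a_{u,h}>0$. The matrix $A^t-E_{h,u}+E_{h+1,u}$ transposes to $A-E_{u,h}+E_{u,h+1}$, and the coefficient becomes $\up^{\beta'_u(A^t,h)}\overline{\lrs{(A^t)_{h+1,u}+1}}=R^+_{h,u}(A)$, since $(A^t)_{h+1,u}=a_{u,h+1}$. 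The formula for $[A]f_h$ follows symmetrically from $\vars(f_h)=e_h$ and \eqref{eq:left-e}.

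The main obstacle is bookkeeping the exponent conventions. BLM state their formulas for the unnormalized basis, with sums over $j>u$ or $j<u$, whereas the statement uses the normalized $[A]$ and the bar-conjugated $\lrs{\cdot}$. I would check this on a single case, $n=2$ with small $r$, against the dimension-shift formula $d_A=\sum_{i\ge k,\,j<l}a_{ij}a_{kl}$. Once that case confirms the identity $\beta_u=\sum_{j\ge u}a_{h,j}-\sum_{j>u}a_{h+1,j}$, including the contribution of the diagonal entry, the remaining formulas follow formally.
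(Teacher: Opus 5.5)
Your proposal follows the same route as the paper: the left formulas are the case $R=1$ of \cite[Lemma 3.4]{BLM90}, and the right formulas come from $[A]e_h=\vars(f_h[A^t])$ and $[A]f_h=\vars(e_h[A^t])$; your reduction of $e_h[A]$ to a single product $[B][A]$ via Lemma \ref{lem:blm-basis} and your transposition of the coefficients into $R^{\pm}_{h,u}(A)$ are correct. The exponent bookkeeping you identify as the main obstacle is not needed, because \cite[Lemma 3.4]{BLM90} is already formulated in the normalized basis and directly yields $\upsilon^{\beta_u(A,h)}\overline{\lrs{a_{h,u}+1}}$ and $\upsilon^{\beta'_u(A,h)}\overline{\lrs{a_{h+1,u}+1}}$; in any case, checking one small example would not prove a general exponent identity.
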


\begin{proof}
The left multiplication formulas are the case $R=1$ of \cite[Lemma 3.4]{BLM90}.  The right multiplication formulas follow from the anti-involution in Lemma \ref{lem:blm-basis} because
$$[A]e_h=\varsigma(f_h[A^t]), \qquad [A]f_h=\varsigma(e_h[A^t]).$$
\end{proof}

\subsection{Centrality as an explicit linear system for type $A$}\label{sec:center-system}

Denote the set of \emph{balanced matrices} by
$$\Omega(n,r)=\{A\in\Theta(n,r)\mid \ro(A)=\co(A)\}.$$

\begin{lem}\label{lem:balanced-support}
If
$c=\sum_{A\in\Theta(n,r)}x_A[A]\in Z(\qS),$
then $x_A=0$ unless $A\in\Omega(n,r)$. Hence,
$$Z(\qS)\subseteq \spanK\{[A]\mid A\in\Omega(n,r)\}.$$
\end{lem}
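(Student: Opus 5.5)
The plan is to test centrality only against the diagonal idempotents $[\diag(\lambda)]$, $\lambda\in\Lambda(n,r)$, and to read off coefficients in the normalized BLM basis of Lemma \ref{lem:blm-basis}. Write $c=\sum_{A\in\Theta(n,r)}x_A[A]\in Z(\qS)$ and fix $\lambda\in\Lambda(n,r)$. The formulas in Lemma \ref{lem:blm-basis} (which remain valid in $\qS$ after base change to $K$) give
\begin{equation*}
[\diag(\lambda)]\,c=\sum_{\substack{A\in\Theta(n,r)\\ \ro(A)=\lambda}}x_A[A],
\qquad
c\,[\diag(\lambda)]=\sum_{\substack{A\in\Theta(n,r)\\ \co(A)=\lambda}}x_A[A].
\end{equation*}

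Since $c$ is central, these two expressions are equal. The set $\{[A]\mid A\in\Theta(n,r)\}$ is a $K$-basis of $\qS$, because $\qS=K\otimes_{\A}\mathcal S_q(n,r)$ and $\mathcal S_q(n,r)$ is free on this set. So we may compare coefficients. Take any $A$ with $\ro(A)=\lambda$ but $\co(A)\neq\lambda$. Then $[A]$ appears on the left with coefficient $x_A$ and does not appear on the right. Hence $x_A=0$.

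Now let $A\notin\Omega(n,r)$ be arbitrary and apply the previous paragraph with $\lambda=\ro(A)$. This gives $x_A=0$, which proves the first claim. The inclusion $Z(\qS)\subseteq\spanK\{[A]\mid A\in\Omega(n,r)\}$ follows immediately.

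There is no real obstacle here. The only point needing care is that the idempotent formulas and the linear independence of the $[A]$ pass from $\A$ to $K$, which holds by flat base change of a free module.
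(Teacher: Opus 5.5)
Your proposal is correct and takes the same approach as the paper: compare the coefficients of $[A]$ in $[\diag(\lambda)]c=c[\diag(\lambda)]$ using the idempotent formulas of Lemma \ref{lem:blm-basis}, and take $\lambda=\ro(A)$ to force $x_A=0$ whenever $\ro(A)\neq\co(A)$. Your remark about passing the basis and the formulas from $\A$ to $K$ just makes explicit a point the paper leaves implicit.
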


\begin{proof}
For any $\lambda\in\Lambda(n,r)$, the equality $[\diag(\lambda)]c=c[\diag(\lambda)]$ and Lemma \ref{lem:blm-basis} give the assertion after comparing the coefficient of each $[A]$.
\end{proof}

For $1\leq h<n$, set $\alpha_h=\eps_h-\eps_{h+1}$ and define
\begin{align*}
    &\Theta^+_{n,r}(h)=\{B\in\Theta(n,r)\mid \ro(B)-\co(B)=\alpha_h\},\\
    &\Theta^-_{n,r}(h)=\{B\in\Theta(n,r)\mid \ro(B)-\co(B)=-\alpha_h\}.
\end{align*}
For $x=(x_A)_{A\in\Omega(n,r)}$, put
$$c(x)=\sum_{A\in\Omega(n,r)}x_A[A].$$

\begin{definition}[Center equations for type $A$]\label{def:center-equations}
For all $1\leq h<n$ and $B=(b_{ij})\in\Theta^+_{n,r}(h)$, impose the \emph{$e_h$-center equations}
\begin{align}
&\sum_{\substack{1\leq u\leq n\\ b_{h,u}>0}}
 x_{B-E_{h,u}+E_{h+1,u}}
 L^+_{h,u}(B-E_{h,u}+E_{h+1,u})  \notag\\
&\hspace{2cm} =
\sum_{\substack{1\leq u\leq n\\ b_{u,h+1}>0}}
 x_{B+E_{u,h}-E_{u,h+1}}
 R^+_{h,u}(B+E_{u,h}-E_{u,h+1}).
\label{eq:center-e}
\end{align}
For all $1\leq h<n$ and $B=(b_{ij})\in\Theta^-_{n,r}(h)$, impose the \emph{$f_h$-center equations}
\begin{align}\label{eq:center-f}
    &\sum_{\substack{1\leq u\leq n\\ b_{h+1,u}>0}}
 x_{B+E_{h,u}-E_{h+1,u}}
 L^-_{h,u}(B+E_{h,u}-E_{h+1,u}) \notag\\
&\hspace{2cm} =
\sum_{\substack{1\leq u\leq n\\ b_{u,h}>0}}
 x_{B-E_{u,h}+E_{u,h+1}}
 R^-_{h,u}(B-E_{u,h}+E_{u,h+1}).
\end{align}
If the subscript matrix of some $x$ is not in $\Omega(n,r)$, that term is interpreted as zero.  Under the displayed non-negativity conditions and the assumption $B\in\Theta^{\pm}_{n,r}(h)$, the subscript matrices that occur are in fact balanced.
\end{definition}

\begin{thm}\label{thm:center-system}
Let $x=(x_A)_{A\in\Omega(n,r)}$. Then
$$c(x)=\sum_{A\in\Omega(n,r)}x_A[A]$$
lies in $Z(\qS)$ if and only if $x$ satisfies all equations \eqref{eq:center-e} and \eqref{eq:center-f} for $1\leq h<n$.
\end{thm}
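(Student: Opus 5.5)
Since $\qS$ is generated by the $e_h$, $f_h$ and $d_i$ (Lemma \ref{lem:generators}), $c(x)$ is central if and only if it commutes with each of these generators. The proof therefore consists of checking each type of generator in turn and translating the commutation into equations on the coefficients $x_A$.

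\emph{Step 1: the $d_i$.} Write $d_i=\sum_{\lambda\in\Lambda(n,r)}\up^{\lambda_i}[\diag(\lambda)]$. By Lemma \ref{lem:blm-basis},
$$d_i[A]=\up^{\ro(A)_i}[A],\qquad [A]d_i=\up^{\co(A)_i}[A].$$
For balanced $A$ these two scalars agree. Hence every $c(x)$ with support in $\Omega(n,r)$ commutes with all $d_i$ automatically. (By Lemma \ref{lem:balanced-support}, this support condition is no loss of generality.)

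\emph{Step 2: expand $e_hc(x)-c(x)e_h$.} Apply \eqref{eq:left-e} and \eqref{eq:right-e} to each $[A]$ with $A\in\Omega(n,r)$. Every resulting basis element is of the form $[A+E_{h,u}-E_{h+1,u}]$ or $[A-E_{u,h}+E_{u,h+1}]$. In both cases the new matrix $B$ satisfies $\ro(B)-\co(B)=\alpha_h$, so $B\in\Theta^+_{n,r}(h)$.

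\emph{Step 3: collect the coefficient of a fixed $[B]$.} Fix $B\in\Theta^+_{n,r}(h)$ and reindex the two sums.
\begin{itemize}
\item On the left side, $B=A+E_{h,u}-E_{h+1,u}$ means $A=B-E_{h,u}+E_{h+1,u}$. The condition $a_{h+1,u}>0$ holds automatically. For $A$ to have nonnegative entries we need $b_{h,u}>0$, and this is exactly the summation range in \eqref{eq:center-e}.
\item On the right side, $A=B+E_{u,h}-E_{u,h+1}$. Here the requirement is $b_{u,h+1}>0$, while $a_{u,h}>0$ is automatic.
\end{itemize}
Also, $\ro(A)-\co(A)=\alpha_h-\alpha_h=0$, so $A$ is balanced. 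This confirms the remark after Definition \ref{def:center-equations}, and shows that no terms have been lost by restricting $x$ to $\Omega(n,r)$.

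Since $\{[B]\}$ is a basis (Lemma \ref{lem:blm-basis}), $e_hc(x)=c(x)e_h$ holds if and only if the coefficient of each $[B]$ vanishes. That vanishing is precisely equation \eqref{eq:center-e}, with the structure constants $L^+_{h,u}$ and $R^+_{h,u}$ evaluated at the source matrix $A$.

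\emph{Step 4: the $f_h$.} Apply the same argument using \eqref{eq:left-f} and \eqref{eq:right-f}. Here the matrices $B$ that occur lie in $\Theta^-_{n,r}(h)$, and the coefficient equations are exactly \eqref{eq:center-f}.

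Putting the steps together, $c(x)$ is central if and only if it commutes with all $e_h$, $f_h$ and $d_i$, which holds if and only if $x$ satisfies \eqref{eq:center-e} and \eqref{eq:center-f}.

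The main obstacle is the bookkeeping in Step 3. One must check that the reindexing map $A\mapsto B$ is a bijection onto the summation ranges stated in Definition \ref{def:center-equations}, including the boundary cases where an entry becomes zero. One must also check that the coefficient functions are evaluated at $A$ and not at $B$. I would verify the nonnegativity correspondence entry by entry, using only the fact that the changed entries are $(h,u)$ and $(h+1,u)$, respectively $(u,h)$ and $(u,h+1)$.
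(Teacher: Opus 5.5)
Your proposal is correct and follows essentially the same route as the paper. You confine the commutators with $e_h$ and $f_h$ to basis vectors indexed by $\Theta^{\pm}_{n,r}(h)$, reindex by the source matrix to read off \eqref{eq:center-e} and \eqref{eq:center-f}, and dispose of the $d_i$ through the balanced support and Lemma \ref{lem:blm-basis} before invoking Lemma \ref{lem:generators}. The only difference is that you spell out the scalar action $d_i[A]=\up^{\ro(A)_i}[A]$ and the entrywise nonnegativity check, which the paper leaves implicit.
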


\begin{proof}
Assume first that $c(x)$ is central and fix $h$. By \eqref{eq:left-e}, every basis element occurring in $e_h[A]$ has the form $A^+_{h,u}:=A+E_{h,u}-E_{h+1,u}$ with $\ro(A^+_{h,u})-\co(A^+_{h,u})=\alpha_h$ since $A$ is balanced. By \eqref{eq:right-e}, every basis element occurring in $[A]e_h$ has the form $A^-_{u,h}:=A-E_{u,h}+E_{u,h+1}$ with $\ro(A^-_{u,h})-\co(A^-_{u,h})=\alpha_h$. Thus the nonzero coefficients of $e_hc(x)$ and $c(x)e_h$ can occur only on basis vectors indexed by $\Theta^+_{n,r}(h)$.

Take $B\in\Theta^+_{n,r}(h)$. In the expansion of $e_hc(x)$, the coefficient of $[B]$ receives contributions from $A=B-E_{h,u}+E_{h+1,u}$, and the nonnegativity condition is equivalent to $b_{h,u}>0$. These contributions form the left side of \eqref{eq:center-e}. In $c(x)e_h$, the coefficient of $[B]$ receives contributions from $A=B+E_{u,h}-E_{u,h+1}$, and the nonnegativity condition is equivalent to $b_{u,h+1}>0$. These contributions form the right side of \eqref{eq:center-e}. Comparing coefficients gives \eqref{eq:center-e}. A  similar argument applied to \eqref{eq:left-f} and \eqref{eq:right-f} gives \eqref{eq:center-f}.

Conversely, assume that all equations \eqref{eq:center-e} and \eqref{eq:center-f} hold. The coefficient comparisons described above show that $c(x)$ commutes with every $e_h$ and $f_h$. Since each $[A]$ in the expansion of $c(x)$ is balanced, Lemma \ref{lem:blm-basis} shows that $c(x)$ commutes with every $[\diag(\lambda)]$, and hence with every $d_i$. Lemma \ref{lem:generators} now implies that $c(x)$ is central.
\end{proof}

\begin{example}\label{ex:31}
Let $(n,r)=(3,1)$. The balanced matrices are
$$D_1=\diag(1,0,0),\qquad D_2=\diag(0,1,0),\qquad D_3=\diag(0,0,1).$$
For
$$c(x)=x_{D_1}[D_1]+x_{D_2}[D_2]+x_{D_3}[D_3],$$
the equation with $h=1$ and $B=E_{12}$ gives $x_{D_2}=x_{D_1}$, while the equation with $h=2$ gives $x_{D_3}=x_{D_2}$. Therefore
$$Z(\boldsymbol{\mathcal S}_q(3,1))=K([D_1]+[D_2]+[D_3])=K\cdot 1.$$
\end{example}

\subsection{Reduced row echelon form basis}

Arrange equations \eqref{eq:center-e} and \eqref{eq:center-f} in any fixed order. This gives a matrix equation
\begin{equation}\label{eq:matrix-system}
    M_{n,r}x=0, 
\end{equation}
where the columns of $M_{n,r}=(m_{ij})$ are labelled by $\Omega(n,r)$ and all entries $m_{ij}\in K$. Theorem \ref{thm:center-system} gives a $K$-linear isomorphism
\begin{equation}\label{eq:kernel-center-isomorphism}
    \Phi_{n,r}:\ker_K M_{n,r}\longrightarrow Z(\qS),
\qquad
x=(x_A)_{A\in\Omega(n,r)}\longmapsto\sum_{A\in\Omega(n,r)}x_A[A].
\end{equation}

We now transform $M_{n,r}$ into the \emph{reduced row echelon form}:
$$R:=\rref(M_{n,r}).$$

Let $\mathfrak{n}=|\Omega(n,r)|$ and fix an ordering
$$\Omega(n,r)=\{A_1,\ldots,A_\mathfrak{n}\}.$$
Write $x=(x_{A_1},\ldots,x_{A_\mathfrak{n}})^t\in K^\mathfrak{n}$. The $j$-th column of $R$ is labelled by $A_j$. Let
$$J_{\operatorname{piv}}=\{j\mid \text{the }\text{$j$-th column of }R\text{ contains the leading }1\text{ of a nonzero row}\}$$
and put $J_{\operatorname{free}}=\{1,\ldots,\mathfrak{n}\}\setminus J_{\operatorname{piv}}$. 
Define the pivot and free label sets by
$$ \mathcal P_{n,r}=\{A_j\mid j\in J_{\operatorname{piv}}\},
  \qquad
  \mathcal F_{n,r}=\{A_j\mid j\in J_{\operatorname{free}}\}.$$
Thus $\Omega(n,r)=\mathcal P_{n,r}\sqcup\mathcal F_{n,r}$. Note that these are sets of matrix labels, not sets of column vectors. 
For each $P\in\mathcal P_{n,r}$, let $\rho(P)$ denote the row of $R$ that contains the pivot in the column labelled by $P$. Then $Rx=0$ is equivalent to
\begin{equation}\label{eq:pivot-free-relation}
    x_P=\sum_{F\in\mathcal F_{n,r}}R'_{\rho(P),F}x_F
  \qquad P\in\mathcal P_{n,r}.
\end{equation}
where $R'_{\rho(P),F}=-R_{\rho(P),F}$ and $R_{\rho(P),F}$ denotes the 
$({\rho(P),F})$-entry of the matrix $R$.

For every $A_0\in\mathcal F_{n,r}$, define
\begin{align}\label{eq:rref-center-basis-element}
    C^{(n,r)}_{A_0}=[A_0]+\sum_{A\in\mathcal{P}_{n,r}}R'_{\rho(A),A_0}[A].
\end{align}

\begin{thm}[RREF-basis]\label{cor:rref-basis}
Keep the notation above. The set
$$\{C_{A_0}^{(n,r)}\mid A_0\in\mathcal F_{n,r}\}$$
forms a $K$-basis of $Z(\qS)$.
\end{thm}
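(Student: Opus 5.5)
The plan is to reduce the statement to standard linear algebra via the isomorphism $\Phi_{n,r}$ of \eqref{eq:kernel-center-isomorphism}. Theorem \ref{thm:center-system} says that $c(x)\in Z(\qS)$ exactly when $M_{n,r}x=0$. Combined with Lemma \ref{lem:balanced-support}, which says every central element has the form $c(x)$ for some $x$ indexed by $\Omega(n,r)$, this gives that $\Phi_{n,r}$ is surjective onto $Z(\qS)$. It is injective because $\{[A]\mid A\in\Omega(n,r)\}$ is part of the $K$-basis of $\qS$ from Lemma \ref{lem:blm-basis}. So it suffices to show that the vectors $v^{(A_0)}\in K^{\mathfrak n}$, for $A_0\in\mathcal F_{n,r}$, form a basis of $\ker_K M_{n,r}$. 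Here $v^{(A_0)}$ has entry $1$ at $A_0$, entry $0$ at every other free label, and entry $R'_{\rho(P),A_0}$ at each pivot label $P$. One then checks that $\Phi_{n,r}(v^{(A_0)})=C^{(n,r)}_{A_0}$, which is immediate from \eqref{eq:rref-center-basis-element}.

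First, I will note that $\ker_K M_{n,r}=\ker_K R$, since $R$ is obtained from $M_{n,r}$ by invertible row operations over the field $K$. Second, I will show that each $v^{(A_0)}$ lies in $\ker R$. By \eqref{eq:pivot-free-relation}, $Rx=0$ is equivalent to the pivot equations $x_P=\sum_F R'_{\rho(P),F}x_F$; the zero rows impose nothing. Substituting $x_F=\delta_{F,A_0}$ gives $x_P=R'_{\rho(P),A_0}$, which is exactly how $v^{(A_0)}$ is defined.

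Third, I will prove that these vectors span $\ker R$. Take any $x\in\ker R$ and set $y=x-\sum_{A_0\in\mathcal F_{n,r}}x_{A_0}v^{(A_0)}$. Then $y\in\ker R$ and all free coordinates of $y$ vanish, so the pivot equations force its pivot coordinates to vanish too, giving $y=0$. Fourth, linear independence follows by looking at free coordinates: the $A_0$-coordinate of a combination $\sum_F c_Fv^{(F)}$ is $c_{A_0}$, so a vanishing combination has all $c_F=0$.

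Finally, I will transport these conclusions through $\Phi_{n,r}$ to obtain the stated basis of $Z(\qS)$. There is no genuine obstacle, since everything is standard RREF bookkeeping. The only point needing care is the well-definedness of $\rho$: each pivot column contains exactly one leading $1$, and by the RREF property the other entries in that column are $0$. This is exactly what makes \eqref{eq:pivot-free-relation} equivalent to $Rx=0$, with each pivot variable appearing in only its own row.
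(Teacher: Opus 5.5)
Your proposal is correct and follows the same route as the paper: identify $Z(\qS)$ with $\ker_K M_{n,r}=\ker_K R$ via $\Phi_{n,r}$, then take the standard free-variable solutions of the pivot--free system \eqref{eq:pivot-free-relation}. You also spell out the spanning and independence steps and why $\Phi_{n,r}$ is bijective (via Lemma \ref{lem:balanced-support} and the BLM basis), which the paper leaves implicit.
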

\begin{proof}
There is an invertible matrix $Y$ such that $R=YM_{n,r}$, and therefore
$$M_{n,r}x=0\quad\Longleftrightarrow\quad YM_{n,r}x=0\quad\Longleftrightarrow\quad Rx=0.$$
In reduced row echelon form, each pivot column $P\in\mathcal P_{n,r}$ has a unique pivot row $\rho(P)$, the pivot coefficient is $1$, and all other pivot-column entries in that row description vanish. Thus $Rx=0$ is precisely the system \eqref{eq:pivot-free-relation}. Fixing the free variables to be the standard vector $x_F=\delta_{F,A_0}$, these standard choices form a basis of $\ker_K M_{n,r}$, and \eqref{eq:kernel-center-isomorphism} sends that basis to the stated central basis. See Example \ref{ex:32-rref} below for details.
\end{proof}

\begin{example}\label{ex:32-rref}
Let $(n,r)=(3,2)$. Order the nine balanced matrices as follows:
$$\begin{gathered}
A_1=\begin{pmatrix}0&0&0\\0&0&0\\0&0&2\end{pmatrix},\quad
A_2=\begin{pmatrix}0&0&0\\0&0&1\\0&1&0\end{pmatrix},\quad
A_3=\begin{pmatrix}0&0&0\\0&1&0\\0&0&1\end{pmatrix},\\[6pt]
A_4=\begin{pmatrix}0&0&0\\0&2&0\\0&0&0\end{pmatrix},\quad
A_5=\begin{pmatrix}0&0&1\\0&0&0\\1&0&0\end{pmatrix},\quad
A_6=\begin{pmatrix}0&1&0\\1&0&0\\0&0&0\end{pmatrix},\\[6pt]
A_7=\begin{pmatrix}1&0&0\\0&0&0\\0&0&1\end{pmatrix},\quad
A_8=\begin{pmatrix}1&0&0\\0&1&0\\0&0&0\end{pmatrix},\quad
A_9=\begin{pmatrix}2&0&0\\0&0&0\\0&0&0\end{pmatrix}.
\end{gathered}$$
The matrix obtained from all central equations has the following nonzero rows in reduced row echelon form:
$$\begin{pmatrix}
1&0&0&0&0&0&0&0&-1\\
0&1&0&0&0&0&0&\up^{-1}&-\up^{-1}\\
0&0&1&0&0&0&0&-1&0\\
0&0&0&1&0&0&0&0&-1\\
0&0&0&0&1&0&0&\up^{-1}&-\up^{-1}\\
0&0&0&0&0&1&0&\up^{-1}&-\up^{-1}\\
0&0&0&0&0&0&1&-1&0
\end{pmatrix}.$$
The pivot columns are labelled by $A_1,\ldots,A_7$, while the free columns are labelled by $A_8$ and $A_9$. Reading the rows gives
$$\begin{aligned}
 x_1&=x_9,&
 x_2&=\up^{-1}(x_9-x_8),&
 x_3&=x_8,\\
 x_4&=x_9,&
 x_5&=\up^{-1}(x_9-x_8),&
 x_6&=\up^{-1}(x_9-x_8),&
 x_7&=x_8.
\end{aligned}$$
Putting $s=x_{A_8}$ and $t=x_{A_9}$, an arbitrary central element takes the form
$$
\begin{aligned}
 c(s,t)=&\ t([A_1]+[A_4]+[A_9])+s([A_3]+[A_7]+[A_8])\\
&\quad +\up^{-1}(t-s)([A_2]+[A_5]+[A_6]).
\end{aligned}
$$
The choices $(s,t)=(1,0)$ and $(s,t)=(0,1)$ give
\begin{align*}
    &C^{\rref}_{110}=[A_3]+[A_7]+[A_8]-\up^{-1}([A_2]+[A_5]+[A_6]),\\
    &C^{\rref}_{200}=[A_1]+[A_4]+[A_9]+\up^{-1}([A_2]+[A_5]+[A_6]).
\end{align*}
\end{example}

\begin{rmk}\label{rem:n2}
When $n=2$, the balanced condition $\ro(A)=\co(A)$ is equivalent to $a_{12}=a_{21}$. Hence every balanced matrix has the unique form
$$A_{s,t}=\begin{pmatrix} r-2s-t&s\\ s&t\end{pmatrix},
  \qquad 0\leq s\leq \left\lfloor r/2\right\rfloor,
  \qquad 0\leq t\leq r-2s.$$ 
For $h=1$, the matrices in $\Theta^+_{2,r}(1)$ have the form
$B_{s,t}=\begin{pmatrix} r+1-2s-t&s\\ s-1&t\end{pmatrix}$.  
Substituting $n=2$ into \eqref{eq:center-e} gives
\begin{align*}
    v^{s-1}\overline{\lrs{s}}x_{s-1,t}
 +v^{3s+2t-2-r}\overline{\lrs{t}}x_{s,t-1} =
 v^{s-1}\overline{\lrs{s}}x_{s-1,t+1}
 +v^{r-s-2t}\overline{\lrs{r+1-2s-t}}x_{s,t}.
\end{align*}
These are precisely the three types of equations in \cite[Lemma 3.3]{GL21}. In this situation, balanced matrices are symmetric, and the anti-automorphism exchanges the $e_1$ and $f_1$ equations without adding new conditions.

For $n\geq 3$, balanced does not imply symmetric. For instance, matrix 
$\begin{pmatrix}
0&1&0\\
0&0&1\\
1&0&0
\end{pmatrix}$ 
is balanced but not symmetric. Thus, one cannot check only the $e_h$-center equations in general. 
Hence, the general case should keep both the $e_h$ and $f_h$ center equations.
\end{rmk}

\begin{thm}\label{thm:center-dimension}
Over $K$, we have
$$\dim_K Z(\qS)=|\Lambda^{+}(n,r)|.$$
Equivalently, $\rank_K M_{n,r}=|\Omega(n,r)|-|\Lambda^{+}(n,r)|$.
\end{thm}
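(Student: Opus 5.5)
The plan is to show that $\qS$ is split semisimple over $K=\Q(\up)$, with simple modules indexed by $\Lambda^+(n,r)$. The dimension formula then follows from general structure theory. The second assertion follows from the isomorphism $\Phi_{n,r}$ in \eqref{eq:kernel-center-isomorphism} and rank–nullity, since
$$\dim_K\ker_K M_{n,r}=|\Omega(n,r)|-\rank_K M_{n,r}.$$

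\textbf{Step 1: semisimplicity of the Hecke algebra.} Let $\mathcal H_K(r)=K\otimes_{\A}\mathcal H_{\A}(r)$. This algebra is split semisimple, and its simple modules (the Specht modules $S^\mu$) are indexed by partitions $\mu$ of $r$. The reason is that $q=\up^2$ is not a root of unity in $K$, so the Poincaré polynomial $\sum_{w\in\mathfrak S_r}q^{\ell(w)}$ does not vanish. This is the standard result of Dipper–James, or equivalently Tits deformation.

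\textbf{Step 2: identifying the $q$-Schur algebra.} Since $K$ is flat over $\A$ and the modules $x_\lambda\mathcal H_{\A}(r)$ are free of finite rank, base change commutes with $\End$. Hence
$$\qS\cong\End_{\mathcal H_K(r)}(T),\qquad T=\bigoplus_{\lambda\in\Lambda(n,r)}x_\lambda\mathcal H_K(r).$$
The endomorphism algebra of a module over a split semisimple algebra is a product of matrix algebras:
$$\qS\cong\prod_\mu M_{m_\mu}(K),$$
where $m_\mu$ is the multiplicity of $S^\mu$ in $T$. Therefore $\dim_K Z(\qS)=\#\{\mu: m_\mu>0\}$.

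\textbf{Step 3: computing the multiplicities.} By Young's rule (valid generically), $m_\mu=\sum_{\lambda\in\Lambda(n,r)}K_{\mu\lambda}$, where $K_{\mu\lambda}$ are Kostka numbers. Hence $m_\mu>0$ exactly when $\mu$ has at most $n$ parts: such a $\mu$ is itself a term $\lambda=\mu\in\Lambda(n,r)$ with $K_{\mu\mu}=1$, while $K_{\mu\lambda}=0$ unless $\mu\trianglerighteq\lambda$, and this forces $\ell(\mu)\le n$. So the $\mu$ with $m_\mu>0$ are exactly the elements of $\Lambda^+(n,r)$.

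\textbf{Main obstacle.} I expect the main difficulty to be justifying Steps 2 and 3 without merely citing them: the base-change compatibility, and Young's rule over $K$ rather than over $\bC$. To handle Young's rule, I would try a specialization argument. Specializing $\up\mapsto1$ and using Tits deformation, the decomposition of $x_\lambda\mathcal H_K(r)$ matches the decomposition of the classical permutation module $\operatorname{Ind}_{\mathfrak S_\lambda}^{\mathfrak S_r}\mathbf 1$, which is given by the classical Young's rule.

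An alternative route is cellularity. The algebra $\qS$ is cellular with cell modules (Weyl modules) indexed by $\Lambda^+(n,r)$, as in Dipper–James, and its bilinear forms are generically nondegenerate. This gives semisimplicity with $|\Lambda^+(n,r)|$ simple modules directly. I would cite this if the Hecke-side argument became cumbersome.
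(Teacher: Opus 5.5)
Your proposal is correct and follows the same route as the paper. Both use split semisimplicity of the generic Hecke algebra and of $\qS$, the fact that the center of a split semisimple algebra has dimension equal to the number of Wedderburn blocks, and rank--nullity through $\Phi_{n,r}$ for the second assertion. The only difference is that you derive the indexing of the blocks by $\Lambda^{+}(n,r)$ from $\qS\cong\End_{\mathcal H_K(r)}(T)$ and Young's rule, whereas the paper cites this as a standard result.
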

\begin{proof}
The generic Hecke algebra is split semisimple, and the $q$-Schur algebra $\qS$ is also split semisimple. Its simple modules are indexed by $\Lambda^{+}(n,r)$, which is a standard result for $q$-Schur algebras; see \cite{DJ89,DJ91,Jim86,Don98}. 
The dimension of the center of a finite-dimensional split semisimple algebra equals the number of isomorphism classes of simple modules. This gives the first formula. The second follows from Theorem \ref{thm:center-system}.
\end{proof}

\begin{rmk}\label{rem:integral}
The BLM multiplication structure constants in Lemma \ref{lem:blm-multiplication} lie in $\mathcal A$, so the same equations define an integral center system for $\mathcal S_{q}(n,r)$.  However, reduced row echelon form may require division by a nonzero Laurent polynomial.  Therefore Theorem \ref{cor:rref-basis} is a statement over $K$, and it should not be read as an unconditional $\A$-basis statement for $Z(\mathcal S_{q}(n,r))$.
\end{rmk}

\subsection{Primitive central idempotents}\label{sec:idempotents}
As a complement, in this section, we record the normalized BLM basis expression of primitive central idempotents for $\qS$.

\begin{prop}\label{prop:primitive-idempotents}
The algebra $\qS$ has a unique family of pairwise orthogonal primitive central idempotents
$$\{z_\lambda\mid \lambda\in \Lambda^{+}(n,r)\}$$
whose sum is $1$ and whose action on irreducible $\qS$-module $L(\mu)$ satisfies
$$z_\lambda|_{L(\mu)}=\delta_{\lambda,\mu}\Id_{L(\mu)}
  \qquad \text{for all }\mu\in \Lambda^{+}(n,r).$$
These elements form a $K$-basis of $Z(\qS)$.
\end{prop}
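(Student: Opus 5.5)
The plan is to deduce the statement from split semisimplicity of $\qS$ over $K$, which was already used in the proof of Theorem \ref{thm:center-dimension}, together with Wedderburn's theorem. Split semisimplicity gives an algebra isomorphism
$$\qS\;\cong\;\prod_{\mu\in\Lambda^{+}(n,r)}\End_K(L(\mu)),$$
where $\{L(\mu)\mid\mu\in\Lambda^+(n,r)\}$ is a complete set of pairwise non-isomorphic irreducible $\qS$-modules; the indexing is the standard one cited there \cite{DJ89,DJ91,Jim86,Don98}. The $\mu$-th factor acts on $L(\mu)$ through its natural action and kills every $L(\mu')$ with $\mu'\neq\mu$.

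Existence comes first. For $\lambda\in\Lambda^+(n,r)$, define $z_\lambda$ to be the preimage of the tuple whose $\lambda$-component is $\Id_{L(\lambda)}$ and whose other components are $0$. These elements are central, since the identity matrix is central in each matrix factor. They are idempotent and pairwise orthogonal, and their sum is $1$, because these properties can be checked componentwise. By construction they satisfy $z_\lambda|_{L(\mu)}=\delta_{\lambda,\mu}\Id_{L(\mu)}$. Primitivity follows because $z_\lambda Z(\qS)\cong Z(\End_K L(\lambda))=K$, which contains no idempotent other than $0$ and $1$.

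Uniqueness is next. Let $z$ be any element satisfying $z|_{L(\mu)}=\delta_{\lambda,\mu}\Id$ for all $\mu$. Then $z-z_\lambda$ acts as zero on every irreducible module. Since $\qS$ is semisimple, its regular module is a direct sum of copies of the $L(\mu)$, so $z-z_\lambda$ annihilates $\qS$ and hence $z=z_\lambda$. The same argument shows that the family is determined as a set. The central primitive idempotents of a finite product of simple algebras are exactly the block identities, and $\lambda$ is the unique label with $z_\lambda L(\lambda)\neq0$.

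For the basis claim, the $z_\lambda$ are linearly independent. Indeed, if $\sum_\lambda c_\lambda z_\lambda=0$, then multiplying by $z_\mu$ gives $c_\mu z_\mu=0$, and $z_\mu\neq0$ because it acts as the identity on $L(\mu)\neq0$. There are $|\Lambda^+(n,r)|$ of them, which equals $\dim_K Z(\qS)$ by Theorem \ref{thm:center-dimension}, so they form a $K$-basis.

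No step here is a real obstacle. The only inputs that are not purely formal are the split semisimplicity of $\qS$ and the parametrization of its simples by $\Lambda^+(n,r)$, both of which are already taken as standard in Theorem \ref{thm:center-dimension}. Writing each $z_\lambda$ explicitly in the normalized BLM basis, for instance as a combination of the RREF basis of Theorem \ref{cor:rref-basis}, is a separate computational matter and is not needed for the statement.
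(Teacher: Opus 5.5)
Your proposal is correct and is essentially the paper's proof: both use the Wedderburn decomposition of the split semisimple algebra $\qS$ and take $z_\lambda$ to be the identity of the $\lambda$-th block. You also write out the verifications that the paper calls immediate, namely primitivity via $z_\lambda Z(\qS)\cong K$, uniqueness because an element acting as zero on every simple module is zero, and the basis claim via the dimension count in Theorem \ref{thm:center-dimension}.
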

\begin{proof}
Since $\qS$ is split semisimple, the Wedderburn decomposition gives $$\qS\cong\prod_{\lambda\in \Lambda^{+}(n,r)}\End_K(L(\lambda)).$$  
 The identity in the $\lambda$-th matrix block and zero in all other blocks gives $z_\lambda$. The stated properties are immediate from this decomposition.
\end{proof}

For $\lambda\in \Lambda^{+}(n,r)$, let $\chi_\lambda$ denote the usual trace character of the simple module $L(\lambda)$. Define the semisimple trace
\begin{equation}\label{eq:semisimple-trace}
    \tau_0(x)=\sum_{\lambda\in \Lambda^{+}(n,r)}\Tr_{L(\lambda)}(x),
  \qquad x\in \qS. 
\end{equation}
Under the Wedderburn decomposition, $\tau_0$ is the sum of matrix traces over all simple blocks. Hence $(x,y)\mapsto\tau_0(xy)$ is a nondegenerate symmetric bilinear form. Let $[A]^\vee$ be the element dual to $[A]$ with respect to this form, so that
\begin{equation}\label{eq:dual-basis}
    \tau_0([A]^\vee[B])=\delta_{A,B}
  \qquad A,B\in\Theta(n,r). 
\end{equation}

\begin{thm}\label{thm:primitive-expansion}
For each $\lambda\in \Lambda^{+}(n,r)$, the primitive central idempotent of Proposition \ref{prop:primitive-idempotents} has the normalized BLM basis expansion
$$z_\lambda=\sum_{A\in\Omega(n,r)}\chi_\lambda([A]^\vee)[A]
.$$
\end{thm}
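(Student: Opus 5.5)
The plan is to read off the coefficients of $z_\lambda$ in the normalized BLM basis by pairing against the dual basis \eqref{eq:dual-basis}, and then to use centrality to cut the sum down to $\Omega(n,r)$.

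First, I would write an arbitrary element in the normalized BLM basis as $x=\sum_{B\in\Theta(n,r)}c_B[B]$ with $c_B\in K$; this is possible by Lemma \ref{lem:blm-basis} after extending scalars to $K$. Since $(x,y)\mapsto\tau_0(xy)$ is nondegenerate, the dual elements $[A]^\vee$ exist and are unique. By $K$-linearity of $\tau_0$ and \eqref{eq:dual-basis},
$$\tau_0([A]^\vee x)=\sum_{B}c_B\,\tau_0([A]^\vee[B])=c_A .$$
So the coefficient of $[A]$ in any $x$ is $\tau_0([A]^\vee x)$.

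Next, I would apply this to $x=z_\lambda$. By \eqref{eq:semisimple-trace},
$$\tau_0([A]^\vee z_\lambda)=\sum_{\mu\in\Lambda^+(n,r)}\Tr_{L(\mu)}\bigl([A]^\vee z_\lambda\bigr).$$
Proposition \ref{prop:primitive-idempotents} says that $z_\lambda$ acts on $L(\mu)$ as $\delta_{\lambda,\mu}\Id_{L(\mu)}$. Hence the operator of $[A]^\vee z_\lambda$ on $L(\mu)$ is $\delta_{\lambda,\mu}$ times the operator of $[A]^\vee$, and only the term $\mu=\lambda$ survives. This gives
$$\tau_0([A]^\vee z_\lambda)=\Tr_{L(\lambda)}([A]^\vee)=\chi_\lambda([A]^\vee),$$
and therefore
$$z_\lambda=\sum_{A\in\Theta(n,r)}\chi_\lambda([A]^\vee)[A].$$

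Finally, $z_\lambda$ is central, so Lemma \ref{lem:balanced-support} forces its coefficient to vanish whenever $A\notin\Omega(n,r)$. In particular $\chi_\lambda([A]^\vee)=0$ for such $A$, and the sum can be restricted to $\Omega(n,r)$, which is the stated expansion.

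There is no serious obstacle here. The only point needing care is the order of multiplication in the pairing: the dual basis is normalized as $\tau_0([A]^\vee[B])=\delta_{A,B}$, with $[A]^\vee$ on the left, so the coefficient functional must be $x\mapsto\tau_0([A]^\vee x)$. Since $\tau_0$ is a trace, the other order would give the same value anyway. It is also worth noting that the trace property is used implicitly in the step $\Tr_{L(\mu)}([A]^\vee z_\lambda)=\delta_{\lambda,\mu}\chi_\lambda([A]^\vee)$, but only through the scalar action of $z_\lambda$ on each block, which holds with either ordering.
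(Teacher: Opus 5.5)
Your proposal is correct and follows essentially the same route as the paper. Both arguments extract the coefficient of $[A]$ as $\tau_0([A]^\vee z_\lambda)$ via the dual basis, then use the scalar action of $z_\lambda$ on each $L(\mu)$ to reduce this to $\chi_\lambda([A]^\vee)$; the only cosmetic difference is that the paper applies Lemma \ref{lem:balanced-support} first, while you expand over $\Theta(n,r)$ and restrict to $\Omega(n,r)$ at the end.
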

\begin{proof}
Write $z_\lambda=\sum_{A\in\Omega(n,r)}c_A[A]$ by Lemma \ref{lem:balanced-support}. By duality,
$$c_A=\tau_0([A]^\vee z_\lambda).$$
Since $z_\lambda$ acts on $L(\mu)$ as $\delta_{\lambda,\mu}\Id_{L(\mu)}$, we obtain
$$\begin{aligned}
 \tau_0([A]^\vee z_\lambda)
 =\sum_{\mu\in \Lambda^{+}(n,r)}\Tr_{L(\mu)}([A]^\vee z_\lambda)
 =\Tr_{L(\lambda)}([A]^\vee)
 =\chi_\lambda([A]^\vee).
\end{aligned}$$
This proves the expansion.
\end{proof}

\begin{example}[The case $(n,r)=(3,2)$]\label{ex:primitive-32}
Keep the notation in Example \ref{ex:32-rref}. Direct calculation gives
\begin{align*}
    \begin{array}{c|ccccccccc}
 &A_1&A_2&A_3&A_4&A_5&A_6&A_7&A_8&A_9\\
\hline
\chi_{(2)}([A_i]^\vee)
&1&\dfrac{\upsilon}{\upsilon^2+1}&\dfrac1{\upsilon^2+1}&1&\dfrac{\upsilon}{\upsilon^2+1}&\dfrac{\upsilon}{\upsilon^2+1}&\dfrac1{\upsilon^2+1}&\dfrac1{\upsilon^2+1}&1\\[0.8em]
\chi_{(1,1)}([A_i]^\vee)
&0&-\dfrac{\upsilon}{\upsilon^2+1}&\dfrac{\upsilon^2}{\upsilon^2+1}&0&-\dfrac{\upsilon}{\upsilon^2+1}&-\dfrac{\upsilon}{\upsilon^2+1}&\dfrac{\upsilon^2}{\upsilon^2+1}&\dfrac{\upsilon^2}{\upsilon^2+1}&0.
\end{array}
\end{align*}

Then, we have
\begin{align*}
 z_{(1,1,0)}
 &=\frac{\upsilon^2}{\upsilon^2+1}\bigl([A_3]+[A_7]+[A_8]\bigr)
   -\frac{\upsilon}{\upsilon^2+1}\bigl([A_2]+[A_5]+[A_6]\bigr),\\[0.4em]
 z_{(2,0,0)}
 &=[A_1]+[A_4]+[A_9]
   +\frac1{\upsilon^2+1}\bigl([A_3]+[A_7]+[A_8]\bigr)
   +\frac{\upsilon}{\upsilon^2+1}\bigl([A_2]+[A_5]+[A_6]\bigr).
\end{align*}
Moreover,
\begin{align*}
    z_{(1,1,0)}=\frac{\up^2}{\up^2+1}C^{{\rref}}_{110},\qquad z_{(2,0,0)}
  =C^{{\rref}}_{200}+\frac{1}{\up^2+1}C^{{\rref}}_{110}.
\end{align*}
\end{example}

\subsection{RREF basis and Fu's  central bases}\label{subsec:fu-rref-comparison}

In \cite{Fu21}, Fu constructs bases of the center by transporting known Geck-Rouquier and Jones bases of the Hecke center through quantum Schur--Weyl duality. In this subsection, we compare the basis elements constructed by Fu with our RREF-basis elements.

Before the comparison, we make explicit the field conventions. Fu works over $\bC(\up)$, whereas this paper works over $K=\bQ(\upsilon)~(q=\up^2)$. In this subsection we set
$$L=\bC(\upsilon),\qquad \qSL=L\otimes_K \qSK,
  \qquad \boldsymbol{\mathcal{H}}_L(r)=L\otimes_{\A} \mathcal{H}_{\A}(r).$$

The following is a standard result for finite-dimensional algebras (cf. \cite[\S 4.2]{DK94}).
\begin{lem}\label{lem:centerDirectTensor}
Let $A$, $B$ and $A_i$ be finite-dimensional $K$-algebras. We have
$$Z\Big(\bigoplus_i A_i\Big)=\bigoplus_i Z(A_i),
  \qquad
  Z(A\otimes_K B)=Z(A)\otimes_K Z(B).$$
\end{lem}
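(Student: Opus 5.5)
For the direct sum statement I will work componentwise. Write $A=\bigoplus_i A_i$ and let $e_i$ denote the identity of $A_i$, viewed as an element of $A$. The $e_i$ are orthogonal central idempotents with $\sum_i e_i=1$. Take $z=\sum_i z_i\in A$ with $z_i\in A_i$. Multiplication in $A$ is componentwise, so $za=az$ for all $a=\sum_i a_i$ holds if and only if $z_ia_i=a_iz_i$ for every $i$ and every $a_i\in A_i$. Hence $z\in Z(A)$ exactly when each $z_i\in Z(A_i)$, which gives $Z(\bigoplus_i A_i)=\bigoplus_i Z(A_i)$. Finite dimensionality plays no role in this part.

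For the tensor product, the inclusion $Z(A)\otimes_K Z(B)\subseteq Z(A\otimes_K B)$ is immediate. An element $z\otimes w$ with $z,w$ central commutes with every pure tensor $a\otimes b$, hence with all of $A\otimes_K B$ by linearity. Since $K$ is a field, $Z(A)\otimes_K Z(B)\to A\otimes_K B$ is injective, so this is a genuine subspace.

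The reverse inclusion is the only step with real content. I will fix a $K$-basis $\{b_j\}$ of $B$ and write an arbitrary central element as $c=\sum_j a_j\otimes b_j$ with $a_j\in A$.

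1. Commuting with $x\otimes 1$ for $x\in A$ gives $\sum_j (xa_j-a_jx)\otimes b_j=0$. Since the $b_j$ are linearly independent, $xa_j=a_jx$ for every $j$, so each $a_j\in Z(A)$. Thus $c\in Z(A)\otimes_K B$.

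2. Next I choose a $K$-basis $\{z_k\}$ of $Z(A)$ and rewrite $c=\sum_k z_k\otimes w_k$ with $w_k\in B$.

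3. Commuting with $1\otimes y$ for $y\in B$ gives $\sum_k z_k\otimes(yw_k-w_ky)=0$. Since the $z_k$ are linearly independent in $A$, each $w_k\in Z(B)$, and so $c\in Z(A)\otimes_K Z(B)$.

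The only subtlety here is the linear-independence argument, which uses freeness of modules over the field $K$. This works over any field; in particular it applies over $L=\bC(\up)$ as well, which is the setting where the paper will use it.
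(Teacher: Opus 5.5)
Your argument is correct. The paper does not prove this lemma itself. It quotes it as a standard fact about finite-dimensional algebras, with a reference to \cite[\S 4.2]{DK94}, so there is no competing argument in the paper to compare against step by step.

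Your proof is self-contained and uses nothing beyond $K$ being a field. The direct-sum part is purely componentwise. The tensor-product part needs only that if $\{b_j\}$ is a basis of $B$ (respectively $\{z_k\}$ a basis of $Z(A)$), then an expression $\sum_j u_j\otimes b_j=0$ (respectively $\sum_k z_k\otimes u_k=0$) in $A\otimes_K B$ forces every $u_j$ (respectively $u_k$) to vanish. In particular, your argument shows the finite-dimensionality hypothesis is superfluous.

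This matters in context. Right after the lemma, the paper deduces $Z(\qSL)=L\otimes_K Z(\qSK)$, which is the tensor-product statement with the first factor equal to $L=\bC(\upsilon)$. That field is infinite-dimensional over $K=\bQ(\upsilon)$, so the lemma as stated does not literally cover this case, but your proof does, using $Z(L)=L$.

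One cosmetic point: the remark that $\sum_i e_i=1$ presupposes finitely many summands. You never actually use it, so it can be dropped.
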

Then 
$$Z(\qSL)=L\otimes_K Z(\qSK).$$

Recall Fu's results in \cite{Fu21}: let $U_{\up}(\mathfrak{sl}_n)$ be the quantum group of $\mathfrak{sl}_n$ over $L$, $\Omega_{\up}$ be the natural representation of $U_{\up}(\mathfrak{sl}_n)$ and
$$\eta_{r,L}:\boldsymbol{\mathcal{H}}_L(r)^{\mathrm{op}}\longrightarrow \End_{U_{\up}(\mathfrak{sl}_n)}(\Omega_{\up}^{\otimes r})$$
be the map induced by the right Hecke action. Fu proves that
\begin{equation}\label{eq:Fu-center-preserving}
  Z(\qSL)=\eta_{r,L}\bigl(Z(\boldsymbol{\mathcal{H}}_L(r))^{\mathrm{op}})\bigr),
\end{equation}
and uses two Hecke central bases to obtain bases of $Z(\qSL)$ \cite[Theorems 3.10, 4.8 and 4.9]{Fu21}. 
For $\lambda\in \Lambda^{+}(n,r)$, set
$$G_\lambda^L:=\eta_{r,L}(f_\lambda^*),\qquad
  J_\lambda^L:=\eta_{r,L}(\mathcal N_\lambda(F_\lambda)).$$
Here $\{f_\lambda^*\}$ is the Geck--Rouquier basis of the Hecke center, and $\{\mathcal N_\lambda(F_\lambda)\}$ is the Jones relative norm basis \cite{GR97,Jo90}.  Fu's Theorems 4.8 and 4.9 state that
$$\mathcal G_{n,r}=\{G^L_\lambda\mid \lambda\in \Lambda^{+}(n,r)\},
  \qquad
  \mathcal J_{n,r}=\{J^L_\lambda\mid \lambda\in \Lambda^{+}(n,r)\}$$
are $L$-bases of $Z(\qSL)$.

Let
$$\mathcal C_{n,r}^{\rref}=\{C_{A_0}^{(n,r)}\mid A_0\in\mathcal F_{n,r}\}$$
be the RREF basis from Theorem \ref{cor:rref-basis}, extended to $L$ as above.  
If Fu’s central elements have the following expansion in the normalized BLM basis over $L$,
$$G_\lambda^L=\sum_{A\in\Omega(n,r)}g_{\lambda,A}[A],
  \qquad
  J_\lambda^L=\sum_{A\in\Omega(n,r)}j_{\lambda,A}[A],
  \qquad g_{\lambda,A},j_{\lambda,A}\in L,$$
then its expansion in the RREF basis is obtained directly from the free-column coordinates:
\begin{equation}\label{eq:free-column-transition}
  G_\lambda^L=\sum_{A_0\in\mathcal F_{n,r}}g_{\lambda,A_0}C_{A_0}^{(n,r)},
  \qquad
  J_\lambda^L=\sum_{A_0\in\mathcal F_{n,r}}j_{\lambda,A_0}C_{A_0}^{(n,r)}.
\end{equation}

Therefore, if
$$\mathcal C_{n,r}^{\rref}=(C_{A_1}^{(n,r)},\ldots,C_{A_{\mathfrak{m}}}^{(n,r)}),
  \quad
  \mathcal G_{n,r}=(G_{\lambda^{(1)}}^L,\ldots,G_{\lambda^{(\mathfrak{m})}}^L),\quad \mathcal J_{n,r}=(J_{\lambda^{(1)}}^L,\ldots,J_{\lambda^{(\mathfrak{m})}}^L),$$
where $A_1,\ldots,A_{\mathfrak{m}}$ is an ordering of $\mathcal F_{n,r}$ and $\lambda^{(1)},\ldots,\lambda^{(\mathfrak{m})}$ is an ordering of $\Lambda^{+}(n,r)$, then
\begin{equation}\label{eq:general-transition-matrix-G}
  \mathcal G_{n,r}=\mathcal C_{n,r}^{\rref}\,T_{\mathcal C\to\mathcal G}^L,
  \qquad
  (T_{\mathcal C\to\mathcal G}^L)_{ij}=g_{\lambda^{(j)},A_i}.
\end{equation}
The Jones transition matrix is given in a similar way by
\begin{equation}\label{eq:general-transition-matrix-J}
  \mathcal J_{n,r}=\mathcal C_{n,r}^{\rref}\,T_{\mathcal C\to\mathcal J}^L,
  \qquad
  (T_{\mathcal C\to\mathcal J}^L)_{ij}=j_{\lambda^{(j)},A_i}.
\end{equation}
These matrices are invertible over $L$ because both ordered sets are bases of the same $L$-vector space. See Example \ref{ex:fu-32} below for details.

A similar transition can be computed through primitive central idempotents.  Let $z_\mu\in Z(\qSK)$ be the primitive central idempotent labelled by $\mu\in \Lambda^{+}(n,r)$, and put
$$z_\mu^L=1\otimes z_\mu\in Z(\qSL).$$
Let $\varepsilon_\mu^H$ denote the primitive central idempotent of $\boldsymbol{\mathcal{H}}_L(r)$ labelled by $\mu$. If
\begin{equation}\label{eq:spectral-fu-general}
  f_\lambda^*=\sum_{\mu\in \Lambda^{+}(n,r)}a_{\lambda\mu}\varepsilon_\mu^H,
  \qquad
  \mathcal{N}_\lambda(F_\lambda)=\sum_{\mu\in \Lambda^{+}(n,r)}b_{\lambda\mu}\varepsilon_\mu^H,
  \qquad a_{\lambda\mu},b_{\lambda\mu}\in L,
\end{equation}
then Fu's center-preserving theorem gives
\begin{equation}\label{eq:spectral-schur-general}
  G_\lambda^L=\sum_{\mu\in \Lambda^{+}(n,r)}a_{\lambda\mu}z_\mu^L,
  \qquad
  J_\lambda^L=\sum_{\mu\in \Lambda^{+}(n,r)}b_{\lambda\mu}z_\mu^L.
\end{equation}

\begin{example}\label{ex:fu-32}
Let us work out the example $(n,r)=(3,2)$ in detail.  The partitions in $\Lambda^+(3,2)$ are $(2,0,0)$ and $(1,1,0)$.  We abbreviate them to $(2)$ and $(1,1)$.  Put $T=T_{s_1}$ in $\boldsymbol{\mathcal{H}}_L(2)$.  Then
$$T^2=(q-1)T+q,
  \qquad q=\upsilon^2.$$
The primitive central idempotents of $\boldsymbol{\mathcal{H}}_L(2)$ are
\begin{equation}\label{eq:eps-S2}
  \varepsilon_{(2)}^H=\frac{T+1}{q+1},
  \qquad
  \varepsilon_{(1,1)}^H=\frac{q-T}{q+1}.
\end{equation}

First consider the Geck--Rouquier basis.  The two conjugacy classes of $\mathfrak S_2$ are represented by $1$ and $s_1$.  Fu's formula for $f_\lambda^*$ gives
$$f_{(1,1)}^*=1,
  \qquad
  f_{(2)}^*=q^{-1}T.$$
Using \eqref{eq:eps-S2},
\begin{equation}\label{eq:GR-spectral-S2}
  f_{(1,1)}^*=\varepsilon^H_{(2)}+\varepsilon^H_{(1,1)},
  \qquad
  f_{(2)}^*=\varepsilon^H_{(2)}-q^{-1}\varepsilon^H_{(1,1)}.
\end{equation}
After applying $\eta_{2,L}$ to \eqref{eq:GR-spectral-S2}, we have
\begin{equation}\label{eq:G-spectral-32}
  G_{(1,1)}^L=z_{(2)}^L+z_{(1,1)}^L,
  \qquad
  G_{(2)}^L=z_{(2)}^L-q^{-1}z_{(1,1)}^L.
\end{equation}

Next consider the Jones basis.  For $\lambda=(2)$, the Young subgroup is all of $\mathfrak S_2$, so
$$\mathcal{N}_{(2)}(F_{(2)})=f_{(2)}^*=\varepsilon^H_{(2)}-q^{-1}\varepsilon^H_{(1,1)}.$$
For $\lambda=(1,1)$, the Young subgroup is trivial, then $F_{(1,1)}=1$.  Hence
$$\mathcal{N}_{(1,1)}(F_{(1,1)})=1+q^{-1}T^2=2+(1-q^{-1})T.$$
Its eigenvalues on the trivial and sign representations are $q+1$ and $(q+1)q^{-1}$.  Therefore
\begin{equation*}\label{eq:Jones-spectral-S2}
  \mathcal{N}_{(1,1)}(F_{(1,1)})=(q+1)\varepsilon^H_{(2)}+(1+q^{-1})\varepsilon^H_{(1,1)}.
\end{equation*}
Applying $\eta_{2,L}$ gives
\begin{equation}\label{eq:J-spectral-32}
  J_{(1,1)}^L=(q+1)z_{(2)}^L+(1+q^{-1})z_{(1,1)}^L,\qquad J_{(2)}^L=z_{(2)}^L-q^{-1}z_{(1,1)}^L.
\end{equation}

With the abbreviations $z_{(2)}=z_{(2,0,0)}$ and $z_{(1,1)}=z_{(1,1,0)}$, we insert the formulas from Example \ref{ex:primitive-32}, after scalar extension, we obtain, 
$$z_{(1,1)}^L=\frac{q}{q+1}C_{110}^{\rref},\qquad z_{(2)}^L=C_{200}^{\rref}+\frac{1}{q+1}C_{110}^{\rref}.$$
Substituting these into \eqref{eq:G-spectral-32} and \eqref{eq:J-spectral-32} gives
\begin{align*}
    &G_{(2)}^L=C_{200}^{\rref},
  \qquad
  G_{(1,1)}^L=C_{200}^{\rref}+C_{110}^{\rref},\\
  &J_{(2)}^L=C_{200}^{\rref},
  \qquad
  J_{(1,1)}^L=(q+1)C_{200}^{\rref}+2C_{110}^{\rref}.
\end{align*}
Finally, we write the transition matrix below:
\begin{equation}\label{eq:matrix-G-32}
  T^{L}_{\mathcal C\to\mathcal G}=\begin{pmatrix}1&1\\0&1\end{pmatrix},
  \qquad
  (T^{L}_{\mathcal C\to\mathcal G})^{-1}=\begin{pmatrix}1&-1\\0&1\end{pmatrix}.
\end{equation}
For the Jones basis,
\begin{equation}\label{eq:matrix-J-32}
  T^{L}_{\mathcal C\to\mathcal J}=\begin{pmatrix}1&q+1\\0&2\end{pmatrix}
  ,\qquad (T^{L}_{\mathcal C\to\mathcal J})^{-1}
  =\begin{pmatrix}1&-\dfrac{q+1}{2}\\[0.4em]0&\dfrac12\end{pmatrix}.
\end{equation}
\end{example}

\section{Centers of $q$-Schur algebras of type $B$}\label{sec:Btype}

We now turn to the $q$-Schur algebras of type $B$. Our notation follows Du and Wu \cite{DW22} (see also \cite{BKLW18,LL21}). Set
$$N=2n+1, \qquad m=n+1, \qquad i^*=N+1-i\quad \text{for }1\leq i\leq N.$$

\subsection{$q$-Schur algebras of type $B$  and their multiplication formulas} We first recall the definition of $q$-Schur algebras of type $B$. Let
$$\Lambda(n+1,r)=\{\lambda=(\lambda_1,\ldots,\lambda_n,\lambda_{n+1})\in\bN^{n+1}\mid |\lambda|=\sum_{i}\lambda_i=r\}.$$
For $\lambda\in\Lambda(n+1,r)$, 
define
$$ \widetilde\lambda=(\widetilde{\lambda}_i)_{1\le i\le N}=(\lambda_1,\ldots,\lambda_n,2\lambda_{n+1}+1,\lambda_n,\ldots,\lambda_1)\in\bN^N,$$
and let $\widetilde\Lambda(n+1,r)$ be the image of $\Lambda(n+1,r)$ under the map $\widetilde{~}$.  Thus $|\widetilde\lambda|=2r+1$.

Let $\mathcal H_{\A}(B_r)$ be the Hecke algebra over $\A$ associated with the Coxeter system $(W,S)$ of type $B_r$, where $S=\{s_1,\ldots,s_r\}$.  It is generated by $T_i=T_{s_i}~(1\le i\le r)$ and subject to the relations:
$$T_i^2=(q-1)T_i+q\quad \text{for all }i,
  \qquad
  T_iT_j=T_jT_i\quad \text{if } |i-j|>1,$$
$$T_iT_{i+1}T_i=T_{i+1}T_iT_{i+1}\quad \text{for }1\leq i<r-1,
  \qquad
  T_{r-1}T_rT_{r-1}T_r=T_rT_{r-1}T_rT_{r-1}.$$
  
For $\lambda\in\Lambda(n+1,r)$, let $W_{\lambda}$ be the parabolic subgroup generated by
$$S\setminus\{s_{\lambda_1+\cdots+\lambda_i}\mid 1\leq i\leq n\}.$$
Put $\widetilde x_{\lambda}=\sum_{w\in W_{\lambda}}T_w$.  The integral $q$-Schur algebra of type $B$ is
$$\mathcal{S}^\jmath(n,r)=\End_{\mathcal H_{\A}(B_r)}\left(\bigoplus_{\lambda\in\Lambda(n+1,r)}\widetilde{x}_{\lambda}\mathcal H_{\A}(B_r)\right),$$
and let $\qSj=K\otimes_{\A}\mathcal{S}^\jmath(n,r)$.  We write $Z(\qSj)$ for its center.

Denote
$$\tTheta(n,r)=\Big\{A=(a_{ij})\in\Mat_N(\bN)~\Big|~\sum_{i,j}a_{ij}=2r+1,
  \ a_{ij}=a_{i^*,j^*}\Big\}.$$
For $A\in\tTheta(n,r)$, we define its row sum vector $\row(A)=(\row(A)_i)_{i=1}^N$ and column sum vector $\col(A)=(\col(A)_j)_{j=1}^N$ by
$$ \row(A)_i=\sum_j a_{ij},
  \qquad
  \col(A)_j=\sum_i a_{ij}.$$

\begin{lem}\label{lem:BLMbasisB}
The algebra $\qSj$ is a free $K$-module with a normalized basis
$$\{[A]\mid A\in\tTheta(n,r)\}.$$
There is an algebra anti-involution $\widetilde\varsigma:\qSj \to\qSj$ satisfying $\widetilde\varsigma([A])=[A^t]$.  For $\lambda\in\Lambda(n+1,r)$ and $A\in\tTheta(n,r)$,
$$[\diag(\widetilde\lambda)][A]=
  \begin{cases}
    [A], & \widetilde\lambda=\row(A),\\
    0, & \text{otherwise},
  \end{cases}
  \qquad
  [A][\diag(\widetilde\lambda)]=
  \begin{cases}
    [A], & \widetilde\lambda=\col(A),\\
    0, & \text{otherwise}.
  \end{cases}$$
\end{lem}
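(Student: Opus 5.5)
This lemma is the type $B$ analogue of Lemma \ref{lem:blm-basis}, and I would prove it by importing the geometric (BLM-type) realization of $\qSj$ from \cite{BKLW18} in the conventions of \cite{DW22}, as was done for type $A$.

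\textbf{Step 1: the basis.} First identify $\mathcal{S}^\jmath(n,r)$ with the convolution algebra of $G$-invariant functions on pairs of isotropic-type flags in a $(2r+1)$-dimensional orthogonal space over $\mathbb F_q$. The $G$-orbits on pairs of flags of type $\widetilde\lambda$ and $\widetilde\mu$ are parametrized by matrices in $\tTheta(n,r)$ with $\row(A)=\widetilde\lambda$ and $\col(A)=\widetilde\mu$. This is the Bruhat-type decomposition; the centrosymmetry $a_{ij}=a_{i^*j^*}$ and the odd central entry come from the self-duality of the flags.

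Equivalently, on the Hecke side, $[A]$ corresponds to the double coset $W_\lambda d W_\mu$ that $A$ encodes. One then checks that $\Hom_{\mathcal H}(\widetilde x_\mu\mathcal H,\widetilde x_\lambda\mathcal H)$ is $\A$-free, with basis given by the double-coset sums $\widetilde x_\lambda\mathcal H\ni \widetilde x_\mu h\mapsto \sum_{w\in W_\lambda dW_\mu}T_wh$, in the standard Dipper--James fashion.

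Normalizing each standard basis element by the power $\up^{-d_A}$, where $d_A$ is the dimension-difference exponent of \cite{BKLW18}, gives $\{[A]\}$. Base change from $\A$ to $K$ preserves freeness, so $\{[A]\mid A\in\tTheta(n,r)\}$ is a $K$-basis of $\qSj$.

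\textbf{Step 2: the anti-involution.} Transposition of pairs of flags, $(F,F')\mapsto(F',F)$, swaps the orbit labelled $A$ with the one labelled $A^t$. It reverses convolution, so it is an algebra anti-automorphism; algebraically it is induced by the anti-involution $T_w\mapsto T_{w^{-1}}$ of the Hecke algebra. The orbit dimensions match under transposition, since $d_A=d_{A^t}$. Hence the normalized elements satisfy $\widetilde\varsigma([A])=[A^t]$, and $\widetilde\varsigma^2=\mathrm{id}$.

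\textbf{Step 3: the idempotents.} The matrix $\diag(\widetilde\lambda)$ labels the diagonal orbit, i.e.\ the identity double coset, with $d=0$. So $[\diag(\widetilde\lambda)]$ is the projection onto the summand $\widetilde x_\lambda\mathcal H$. By the support of the convolution, $[\diag(\widetilde\lambda)][A]$ equals $[A]$ if $\row(A)=\widetilde\lambda$ and vanishes otherwise. The column statement follows in the same way, or by applying $\widetilde\varsigma$ to the row statement.

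\textbf{Main obstacle.} The only delicate point is the orbit parametrization in Step 1: verifying that the type $B$ double cosets $W_\lambda\backslash W/W_\mu$ biject with the centrosymmetric matrices in $\tTheta(n,r)$ with the prescribed row and column sums. I would cite this directly from \cite{BKLW18} and \cite[\S3]{DW22} rather than reprove it.
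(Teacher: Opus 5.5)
The paper gives no proof of this lemma; it is recalled from \cite{DW22} (see also \cite{BKLW18,LL21}). Importing the BKLW geometric realization is therefore the intended route, and your Steps 1 and 3 are fine. (In Step 1 the standard map should be defined on $\widetilde x_\mu\mathcal H$, not on $\widetilde x_\lambda\mathcal H$.)

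\textbf{The gap in Step 2.} Step 2 rests on a false claim: in general $d_A\neq d_{A^t}$. What is symmetric is the orbit dimension, $\dim\mathcal O_A=\dim\mathcal O_{A^t}$. The normalizing exponent of \cite{BLM90,BKLW18}, however, is a fibre dimension: $\dim\mathcal O_A$ minus the dimension of the flag variety $\mathcal X_{\widetilde\lambda}$ of one fixed side.

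For example, take $n=r=1$ (so $N=3$) and $A=\widetilde E_{21}+\widetilde E_{22}+\widetilde E_{23}$. Then $\row(A)=(0,3,0)$ and $\col(A)=(1,1,1)$. The flag variety $\mathcal X_{(0,3,0)}$ is a point, $\mathcal X_{(1,1,1)}$ is the conic of isotropic lines in $\mathbb F_q^3$, and $\mathcal O_A$ is their product. Hence $\{d_A,d_{A^t}\}=\{0,1\}$.

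Consequently the transpose $\tau\colon e_A\mapsto e_{A^t}$ on characteristic functions of orbits is indeed an algebra anti-automorphism. The same holds for the map induced by $T_w\mapsto T_{w^{-1}}$. But $\tau$ sends $[A]$ to $\up^{d_{A^t}-d_A}[A^t]$, not to $[A^t]$. So Step 2 as written does not produce the required $\widetilde\varsigma$.

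\textbf{The repair.} The discrepancy depends only on the row and column types:
$d_{A^t}-d_A=g(\row(A))-g(\col(A))$, where $g(\widetilde\lambda)=\pm\dim\mathcal X_{\widetilde\lambda}$, the sign depending on which side the normalization uses. Put $D=\sum_{\lambda\in\Lambda(n+1,r)}\up^{g(\widetilde\lambda)}[\diag(\widetilde\lambda)]$, which is invertible, and define $\widetilde\varsigma(x)=D\,\tau(x)\,D^{-1}$.

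By your Step 3, $D[A^t]D^{-1}=\up^{g(\col(A))-g(\row(A))}[A^t]$, so $\widetilde\varsigma([A])=[A^t]$. Moreover $\widetilde\varsigma$ is an anti-automorphism, being an anti-automorphism followed by an inner automorphism, and $\widetilde\varsigma^2=\mathrm{id}$ can be checked on the basis.

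Equivalently: since $[A][B]\neq 0$ forces $\col(A)=\row(B)$, the rescaling scalars telescope, and the linear map $[A]\mapsto[A^t]$ remains anti-multiplicative. This twist is the missing ingredient; with it, your argument yields the lemma.
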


Let $\widetilde E_{ij}\in \Mat_N(\bN)$ be the $N\times N$ matrix unit and define
$$ E^{\theta}_{i,j}=\widetilde E_{i,j}+\widetilde E_{i^*,j^*}=E^{\theta}_{i^*,j^*}.$$
In particular, $E^{\theta}_{m,m}=2\widetilde E_{m,m}$.  Let $\mathbf e_i$ be the $i$-th standard basis vector of $\bZ^N$ and set
$$\mathbf e_i^{\theta}=\mathbf e_i+\mathbf e_{i^*}=\row(E^{\theta}_{i,i}),
  \qquad
  \alpha_i^{\theta}=\mathbf e_i^{\theta}-\mathbf e_{i+1}^{\theta}.$$

For $\boldsymbol{j}=(j_1,\ldots,j_N)\in\bZ^N$ and $A=(a_{ij})\in\Mat_N(\bN)$ with  $a_{ii}=0$ and $a_{i,j}=a_{i^*,j^*}$, define the following elements of $\qSj$:
\begin{equation*}\label{eq:AjrB}
    A[\boldsymbol{j},r]=
\begin{cases}
\displaystyle\sum_{\lambda\in\Lambda(n+1,r-|A|/2)}
  \upsilon^{\sum_{i=1}^{N}\widetilde\lambda_i j_i}[A+\diag(\widetilde\lambda)],& |A|\leq 2r,\\[6pt]
0,& |A|>2r.
\end{cases}
\end{equation*}

\begin{lem}\label{lem:genB}(\cite[Corollary 3.13]{BKLW18})
The algebra $\qSj$ is generated by $e_h^\jmath:=E^{\theta}_{h,h+1}[\mathbf 0,r]$, $f_h^\jmath:=E^{\theta}_{h+1,h}[\mathbf 0,r]$ for $1\leq h\leq n$ and by $d_i^\jmath:=O[\mathbf e_i,r]$ for $1\leq i\leq n+1$.
\end{lem}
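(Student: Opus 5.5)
The plan is to show that the subalgebra $\mathcal{G}\subseteq\qSj$ generated by the $e_h^\jmath$, $f_h^\jmath$ and $d_i^\jmath$ contains every normalized basis element $[A]$, $A\in\tTheta(n,r)$. The argument imitates the type $A$ proof of Lemma \ref{lem:generators}: first obtain the diagonal idempotents, then produce each $[A]$ as the leading term of a suitable monomial and finish by triangular induction.

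\emph{Step 1: the idempotents.} By Lemma \ref{lem:BLMbasisB} and the definition of $O[\mathbf e_i,r]$, we have
$$d_i^\jmath=\sum_{\lambda\in\Lambda(n+1,r)}\up^{\widetilde\lambda_i}[\diag(\widetilde\lambda)].$$
These $[\diag(\widetilde\lambda)]$ are pairwise orthogonal idempotents summing to $1$. Hence the commutative subalgebra generated by the $d_i^\jmath$ is the image of $K[t_1,\dots,t_{n+1}]$ under evaluation at the points
$$\bigl(\up^{\lambda_1},\dots,\up^{\lambda_n},\up^{2\lambda_{n+1}+1}\bigr),\qquad \lambda\in\Lambda(n+1,r).$$
These points are pairwise distinct, since $\lambda$ is recovered from the exponents. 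Lagrange interpolation over $K$ therefore gives $[\diag(\widetilde\lambda)]\in\mathcal G$ for every $\lambda$.

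\emph{Step 2: the monomials.} I would use the type $B$ BLM multiplication formulas for $e_h^\jmath[A]$ and $f_h^\jmath[A]$ from \cite{BKLW18}, the analogues of Lemma \ref{lem:blm-multiplication}. They move one entry (and its $*$-mirror) between rows $h$ and $h+1$, with coefficients that are powers of $\up$ times bar-quantum integers. Over $K$ the divided powers $(e_h^\jmath)^{(a)}$ and $(f_h^\jmath)^{(a)}$ lie in $\mathcal G$, because the relevant quantum factorials are nonzero. For each $A\in\tTheta(n,r)$ I would form the product
$$m^{(A)}=\Bigl(\prod (f^\jmath)^{(\cdot)}\Bigr)\,[\diag(\widetilde\lambda)]\,\Bigl(\prod (e^\jmath)^{(\cdot)}\Bigr).$$
The exponents are read off from the strictly lower and strictly upper parts of $A$, ordered as in BLM \cite[Prop.~3.9]{BLM90}, and $\widetilde\lambda=\col(A)$ is chosen so that the product is nonzero. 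The expected conclusion is
$$m^{(A)}=[A]+\sum_{A'\prec A}c_{A'}[A'],$$
where $\prec$ is the usual BLM partial order on $\tTheta(n,r)$ with the same row and column sums as $A$. Here $A'\preceq A$ means that all sums of entries in north-east corners, and in south-west corners, of $A'$ are at most those of $A$.

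\emph{Step 3: induction.} Since $\tTheta(n,r)$ is finite and the leading coefficient is $1$, induction along $\prec$ gives $[A]\in\mathcal G$ for all $A$, and hence $\mathcal G=\qSj$.

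The main obstacle is Step 2 at the middle index. For $h=n$ the generators $e_n^\jmath,f_n^\jmath$ involve the row and column $m=n+1$, where $E^\theta_{m,m}=2\widetilde E_{m,m}$ and $\widetilde\lambda_m=2\lambda_{n+1}+1$. There the multiplication formulas acquire extra terms and different quantum-integer coefficients. I would first check that, in the ordered product, the divided powers of $e_n^\jmath,f_n^\jmath$ are applied at the stage where the middle row and column entries only increase. Then the extra terms are strictly lower in $\prec$ and the leading coefficient is a nonzero element of $K$, which suffices after rescaling. If this check becomes unwieldy, the fallback is the route of \cite{BKLW18}: the surjection from the $\imath$quantum group $U^\jmath$ onto $\qSj$ given by Schur duality, which sends the Chevalley generators to $e_h^\jmath,f_h^\jmath,d_i^\jmath$.
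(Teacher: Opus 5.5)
The paper gives no argument here beyond citing \cite[Corollary 3.13]{BKLW18}. Your outline (idempotents by interpolation in the $d_i^\jmath$, then BLM-style triangularity of monomials in divided powers, with $h=n$ as the delicate case) is essentially the standard leading-term argument underlying that corollary, so the approach is the right one. There are two small corrections. First, with $[\diag(\widetilde\lambda)]$ placed in the middle of $m^{(A)}$ you need $\widetilde\lambda=\col(A)+\sum_h a_h\alpha_h^\theta$, where $a_h$ is the total exponent of $e_h^\jmath$ to its right, not $\widetilde\lambda=\col(A)$. Second, the fallback via $\mathbf U^\jmath\twoheadrightarrow\qSj$ is only legitimate if that surjectivity comes from an independent double centralizer theorem, since in the BLM-type treatment it is deduced from this very generation statement.
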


For $A=(a_{ij})\in\tTheta(n,r)$, $1\leq h\leq n$ and $1\leq p\leq N$, define
\begin{equation}\label{eq:betaB}
  \widetilde\beta_p(A,h)=\sum_{j\geq p}a_{h,j}-\sum_{j>p}a_{h+1,j}+\delta_{h,n}\delta^{\leq}_{p,n},
  \quad
  \widetilde\beta'_p(A,h)=\sum_{j\leq p}a_{h+1,j}-\sum_{j<p}a_{h,j},
\end{equation}
where $\delta^{\leq}_{a,b}=\begin{cases}1,& a\leq b,\\0,& a>b\end{cases}$ is a generalization of the Kronecker delta $\delta_{a,b}$.

\begin{prop}\label{prop:BLMmultB}
For $A\in\tTheta(n,r)$ and $1\leq h\leq n$, we have, in $\qSj$,
\begin{align}
  e_h^\jmath[A]
  &=\sum_{\substack{1\leq p\leq N\\ a_{h+1,p}\geq \epsilon^{\theta}_{h+1,p}}}
    \widetilde L^+_{h,p}(A)[A+E^{\theta}_{h,p}-E^{\theta}_{h+1,p}], \label{eq:BleftE}\\
  f_h^\jmath[A]
  &=\sum_{\substack{1\leq p\leq N\\ a_{h,p}\geq 1}}
    \widetilde L^-_{h,p}(A)[A-E^{\theta}_{h,p}+E^{\theta}_{h+1,p}], \label{eq:BleftF}\\
  [A]e_h^\jmath
  &=\sum_{\substack{1\leq p\leq N\\ a_{p,h}\geq 1}}
    \widetilde R^+_{h,p}(A)[A-E^{\theta}_{p,h}+E^{\theta}_{p,h+1}], \label{eq:BrightE}\\
  [A]f_h^\jmath
  &=\sum_{\substack{1\leq p\leq N\\ a_{p,h+1}\geq \epsilon^{\theta}_{p,h+1}}}
    \widetilde R^-_{h,p}(A)[A+E^{\theta}_{p,h}-E^{\theta}_{p,h+1}], \label{eq:BrightF}
\end{align}
where $\epsilon^{\theta}_{i,j}=
  \begin{cases}
    2, & (i,j)=(m,m),\\
    1, & \text{otherwise}.
  \end{cases}$ and 
\begin{align*}
    &\widetilde L^+_{h,p}(A)=\upsilon^{\widetilde\beta_p(A,h)}\overline{\lrs{a_{h,p}+1}},
  \qquad
  \widetilde L^-_{h,p}(A)=\upsilon^{\widetilde\beta'_p(A,h)}\overline{\lrs{a_{h+1,p}+1}},\\
  &\widetilde R^+_{h,p}(A)=\upsilon^{\widetilde\beta'_p(A^t,h)}\overline{\lrs{a_{p,h+1}+1}},
  \qquad
  \widetilde R^-_{h,p}(A)=\upsilon^{\widetilde\beta_p(A^t,h)}\overline{\lrs{a_{p,h}+1}}.
\end{align*}
\end{prop}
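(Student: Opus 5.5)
The plan is to derive these formulas from the known type $B$ BLM multiplication formulas and the anti-involution $\widetilde\varsigma$ of Lemma \ref{lem:BLMbasisB}, in the same way as Lemma \ref{lem:blm-multiplication} was obtained in type $A$. The left multiplication formulas \eqref{eq:BleftE} and \eqref{eq:BleftF} will be taken from the multiplication formulas of Bao--Kujawa--Li--Wang \cite[Lemma 3.2/Proposition 3.3]{BKLW18}, in the normalization used by Du and Wu \cite{DW22}, specialized to $R=1$. There, $e^\jmath_h[A]$ and $f^\jmath_h[A]$ are written as sums over $p$ of $\upsilon$-powers times bar-quantum integers times $[A\pm(E^\theta_{h,p}-E^\theta_{h+1,p})]$.

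The first step is to match notation. I would check that the exponent in \cite{BKLW18,DW22} agrees with $\widetilde\beta_p(A,h)$ and $\widetilde\beta'_p(A,h)$ as defined in \eqref{eq:betaB}. The only new feature compared with type $A$ is the term $\delta_{h,n}\delta^{\le}_{p,n}$, which comes from the middle row $m=n+1$ being fixed by $i\mapsto i^*$. I would also check that the quantum integer $\overline{\lrs{a_{h,p}+1}}$ (respectively $\overline{\lrs{a_{h+1,p}+1}}$) is the coefficient given there. Next comes the summation range. A term appears only when $A+E^\theta_{h,p}-E^\theta_{h+1,p}$ has nonnegative entries. Since $E^\theta_{m,m}=2\widetilde E_{m,m}$, this requires $a_{h+1,p}\ge 2$ when $(h+1,p)=(m,m)$ and $a_{h+1,p}\geq 1$ otherwise, which is exactly $a_{h+1,p}\ge\epsilon^\theta_{h+1,p}$. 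For $f_h^\jmath$ with $h\le n$, the subtracted entry $(h,p)$ is never $(m,m)$, so the condition is $a_{h,p}\ge1$.

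For the right multiplication formulas, I would use $\widetilde\varsigma$. First I would verify that $\widetilde\varsigma(e^\jmath_h)=f^\jmath_h$ and $\widetilde\varsigma(f^\jmath_h)=e^\jmath_h$. This holds because $\widetilde\varsigma$ transposes each summand $[E^\theta_{h,h+1}+\diag(\widetilde\lambda)]$ and the coefficients are all $1$. It follows that
\[
[A]e^\jmath_h=\widetilde\varsigma\bigl(f^\jmath_h[A^t]\bigr),\qquad [A]f^\jmath_h=\widetilde\varsigma\bigl(e^\jmath_h[A^t]\bigr).
\]
Applying \eqref{eq:BleftF} to $A^t$, with $(A^t)_{h,p}=a_{p,h}$, and transposing gives the matrices $A-E^\theta_{p,h}+E^\theta_{p,h+1}$. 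The coefficients are $\upsilon^{\widetilde\beta'_p(A^t,h)}\overline{\lrs{a_{p,h+1}+1}}=\widetilde R^+_{h,p}(A)$, and the summation condition becomes $a_{p,h}\ge1$. Similarly, \eqref{eq:BleftE} applied to $A^t$ gives \eqref{eq:BrightF}, with the condition $a_{p,h+1}\ge\epsilon^\theta_{p,h+1}$, since $\epsilon^\theta$ is symmetric.

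I expect the main obstacle to be the bookkeeping for the first step. The cited sources use different conventions: \cite{BKLW18} uses $\upsilon^{-1}$ versus $\upsilon$, a different indexing of the middle row, and $N=2n+1$ versus $2n$. The extra correction $\delta_{h,n}\delta^{\le}_{p,n}$ at $h=n$ must therefore be derived carefully. My first approach would be to translate \cite[Proposition 3.3]{DW22} directly, since that is the source whose notation is being followed. As a sanity check, I would test the case $n=1$, $r=1$ by comparing $e^\jmath_1[A]$ with a direct Hecke-algebra computation in $\mathcal H(B_1)$.
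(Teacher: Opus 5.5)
Your proposal is correct and is essentially the paper's proof. The paper quotes the left formulas directly from Du--Wu, citing \cite[Theorem 4.1]{DW22} rather than a Proposition~3.3, and obtains the right formulas exactly as you do, via $[A]e_h^\jmath=\widetilde\varsigma(f_h^\jmath[A^t])$ and $[A]f_h^\jmath=\widetilde\varsigma(e_h^\jmath[A^t])$; your checks that $\widetilde\varsigma$ swaps $e_h^\jmath$ and $f_h^\jmath$, of the $\epsilon^\theta$ range conditions, and of the transposed coefficients are details the paper leaves implicit.
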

\begin{proof}
The formulas \eqref{eq:BleftE} and \eqref{eq:BleftF} are the multiplication formulas of \cite[Theorem 4.1]{DW22}.  The right multiplication formulas follow from the anti-involution $\widetilde\varsigma$:
$$[A]e_h^\jmath=\widetilde\varsigma(f_h^\jmath[A^t]),
  \qquad
  [A]f_h^\jmath=\widetilde\varsigma(e_h^\jmath[A^t]).$$
\end{proof}

\subsection{Centrality as an explicit linear system for type $B$}

In this subsection, we deduce some results for $Z(\qSj)$. We only list the statements here without proofs, as they can be proved by arguments similar to those for type 
$A$.

Define the balanced matrix set for type $B$:
$$\tOmega(n,r)=\{A\in\tTheta(n,r)\mid \row(A)=\col(A)\}.$$

\begin{lem}\label{lem:balancedB}
If $c(x)=\sum_{A\in\tTheta(n,r)}x_A[A]$ belongs to $Z(\qSj)$, then $x_A=0$ unless $A\in\tOmega(n,r)$.  Hence every central element has a unique expression
\begin{equation}\label{eq:centerExpansionB}
  c(x)=\sum_{A\in\tOmega(n,r)}x_A[A].
\end{equation}
\end{lem}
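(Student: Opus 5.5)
The plan is to mirror the type $A$ argument of Lemma \ref{lem:balanced-support}, using the diagonal idempotents $[\diag(\widetilde\lambda)]$ for $\lambda\in\Lambda(n+1,r)$ from Lemma \ref{lem:BLMbasisB}. Fix such a $\lambda$ and expand both sides of $[\diag(\widetilde\lambda)]c(x)=c(x)[\diag(\widetilde\lambda)]$ in the basis $\{[A]\mid A\in\tTheta(n,r)\}$. By Lemma \ref{lem:BLMbasisB}, the left side is $\sum_{\row(A)=\widetilde\lambda}x_A[A]$ and the right side is $\sum_{\col(A)=\widetilde\lambda}x_A[A]$. Since the $[A]$ are linearly independent, comparing coefficients gives $x_A=0$ whenever exactly one of $\row(A)=\widetilde\lambda$ and $\col(A)=\widetilde\lambda$ holds.

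The step that needs care is making sure every row and column vector actually arises as some $\widetilde\lambda$, so that the argument applies to every $A$. For $A\in\tTheta(n,r)$, the symmetry $a_{ij}=a_{i^*j^*}$ gives $\row(A)_i=\row(A)_{i^*}$. The middle entry is $\row(A)_m=\sum_j a_{mj}$; the symmetry pairs $a_{mj}$ with $a_{m,j^*}$, and the unpaired term $a_{mm}$ forces this sum to be odd. The paired total then gives $\sum_i\row(A)_i=2(\text{first $n$ entries})+\row(A)_m=2r+1$. Hence $\row(A)=\widetilde\mu$, where $\mu_i=\row(A)_i$ for $i\le n$ and $\mu_{n+1}=(\row(A)_m-1)/2\in\bN$, and clearly $\mu\in\Lambda(n+1,r)$. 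The same reasoning shows $\col(A)=\widetilde\nu$ for some $\nu\in\Lambda(n+1,r)$. The map $\lambda\mapsto\widetilde\lambda$ is injective, since $\lambda$ is recovered from $\widetilde\lambda$.

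Now suppose $A\notin\tOmega(n,r)$, so $\widetilde\mu=\row(A)\neq\col(A)=\widetilde\nu$. Take $\lambda=\mu$ in the comparison above: the coefficient of $[A]$ on the left is $x_A$, while on the right it is $0$. So $x_A=0$. This shows $c(x)$ is supported on $\tOmega(n,r)$, giving the expansion \eqref{eq:centerExpansionB}. The expansion is unique because $\{[A]\}$ is a basis.
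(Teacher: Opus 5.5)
Your proposal is correct and is essentially the paper's argument. The paper gives no separate proof and defers to the type $A$ proof of Lemma \ref{lem:balanced-support}: compare coefficients in $[\diag(\widetilde\lambda)]c(x)=c(x)[\diag(\widetilde\lambda)]$ using Lemma \ref{lem:BLMbasisB}. Your extra check that $\row(A)$ and $\col(A)$ always lie in $\widetilde\Lambda(n+1,r)$ is a detail the paper leaves implicit. The oddness of $\row(A)_m$ follows most directly from $2\sum_{i\le n}\row(A)_i+\row(A)_m=2r+1$; saying only that $a_{mm}$ is unpaired does not by itself show it is odd.
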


For $1\leq h\leq n$, define
\begin{align*}
    & \Theta^{\jmath,+}_{n,r}(h)=\{B\in\tTheta(n,r)\mid \row(B)-\col(B)=\alpha_h^{\theta}\},\\
    &\Theta^{\jmath,-}_{n,r}(h)=\{B\in\tTheta(n,r)\mid \row(B)-\col(B)=-\alpha_h^{\theta}\}.
\end{align*}

\begin{definition}[Center equations for type $B$]\label{def:centerEqB}
For $1\leq h\leq n$ and $B=(b_{ij})\in\Theta^{\jmath,+}_{n,r}(h)$, impose $e_h^\jmath$-center equations
\begin{align}
  &\sum_{\substack{1\leq p\leq N\\ b_{h,p}\geq 1}}
  x_{B-E^{\theta}_{h,p}+E^{\theta}_{h+1,p}}
  \widetilde L^+_{h,p}(B-E^{\theta}_{h,p}+E^{\theta}_{h+1,p}) \notag\\
  &\hspace{3cm}=
  \sum_{\substack{1\leq p\leq N\\ b_{p,h+1}\geq \epsilon^{\theta}_{p,h+1}}}
  x_{B+E^{\theta}_{p,h}-E^{\theta}_{p,h+1}}
  \widetilde R^+_{h,p}(B+E^{\theta}_{p,h}-E^{\theta}_{p,h+1}). \label{eq:centerE_B}
\end{align}
For $1\leq h\leq n$ and $B=(b_{ij})\in\Theta^{\jmath,-}_{n,r}(h)$, impose $f_h^\jmath$-center equations
\begin{align}
  &\sum_{\substack{1\leq p\leq N\\ b_{h+1,p}\geq \epsilon^{\theta}_{h+1,p}}}
  x_{B+E^{\theta}_{h,p}-E^{\theta}_{h+1,p}}
  \widetilde L^-_{h,p}(B+E^{\theta}_{h,p}-E^{\theta}_{h+1,p}) \notag\\
  &\hspace{3cm}=
  \sum_{\substack{1\leq p\leq N\\ b_{p,h}\geq 1}}
  x_{B-E^{\theta}_{p,h}+E^{\theta}_{p,h+1}}
  \widetilde R^-_{h,p}(B-E^{\theta}_{p,h}+E^{\theta}_{p,h+1}). \label{eq:centerF_B}
\end{align}
Again a term is interpreted as zero if its subscript matrix is not in $\tOmega(n,r)$.
\end{definition}

\begin{thm}\label{thm:centerSystemB}
Let $x=(x_A)_{A\in\tOmega(n,r)}$.  Then
$$c(x)=\sum_{A\in\tOmega(n,r)}x_A[A]$$
lies in $Z(\qSj)$ if and only if $x$ satisfies all equations \eqref{eq:centerE_B} and \eqref{eq:centerF_B} for $1\leq h\leq n$.
\end{thm}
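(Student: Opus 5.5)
The plan is to mirror the proof of Theorem \ref{thm:center-system}, replacing the type $A$ ingredients by their type $B$ counterparts: Lemma \ref{lem:BLMbasisB}, Lemma \ref{lem:genB}, Proposition \ref{prop:BLMmultB}, and Lemma \ref{lem:balancedB}.

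\textbf{Necessity.} Suppose $c(x)=\sum_{A\in\tOmega(n,r)}x_A[A]$ is central, and fix $1\leq h\leq n$.
\begin{itemize}
\item \emph{Support of the products.} For balanced $A$, every term $[A+E^\theta_{h,p}-E^\theta_{h+1,p}]$ in \eqref{eq:BleftE} satisfies $\row-\col=\mathbf e^\theta_h-\mathbf e^\theta_{h+1}=\alpha^\theta_h$. Likewise every term $[A-E^\theta_{p,h}+E^\theta_{p,h+1}]$ in \eqref{eq:BrightE} has $\row-\col=\alpha^\theta_h$. So $e^\jmath_hc(x)$ and $c(x)e^\jmath_h$ are supported on $\Theta^{\jmath,+}_{n,r}(h)$. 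The analogous statement for $f^\jmath_h$ holds with $\Theta^{\jmath,-}_{n,r}(h)$.
\item \emph{Coefficient comparison.} Fix $B\in\Theta^{\jmath,+}_{n,r}(h)$ and extract the coefficient of $[B]$. On the left, the contributions come from $A=B-E^\theta_{h,p}+E^\theta_{h+1,p}$. On the right, they come from $A=B+E^\theta_{p,h}-E^\theta_{p,h+1}$. Equating the two sides gives \eqref{eq:centerE_B}. The $f^\jmath_h$ case is the same and gives \eqref{eq:centerF_B}.
\end{itemize}

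\textbf{Sufficiency.} Suppose all equations \eqref{eq:centerE_B} and \eqref{eq:centerF_B} hold.
\begin{itemize}
\item The same coefficient comparison, read backwards, shows that $c(x)$ commutes with every $e^\jmath_h$ and every $f^\jmath_h$.
\item Since $c(x)$ is supported on balanced matrices, Lemma \ref{lem:BLMbasisB} gives $[\diag(\widetilde\lambda)]c(x)=c(x)[\diag(\widetilde\lambda)]$ for every $\lambda$. Each $d^\jmath_i=O[\mathbf e_i,r]$ is a linear combination of the idempotents $[\diag(\widetilde\lambda)]$, so $c(x)$ also commutes with every $d^\jmath_i$.
\item By Lemma \ref{lem:genB}, these elements generate $\qSj$, so $c(x)$ is central.
\end{itemize}

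\textbf{Main obstacle.} The hard part is the index bookkeeping created by the symmetry $a_{ij}=a_{i^*j^*}$ and by the special middle entry $(m,m)$. The check is that $A\mapsto B=A+E^\theta_{h,p}-E^\theta_{h+1,p}$ is a bijection between the terms of \eqref{eq:BleftE} and the terms of the left sum in \eqref{eq:centerE_B}, with matching inequality conditions.
\begin{itemize}
\item \emph{Left side.} The condition $a_{h+1,p}\geq\epsilon^\theta_{h+1,p}$ is exactly nonnegativity of $B-E^\theta_{h,p}+E^\theta_{h+1,p}$ being recovered. When $h=n$, $p=m$, the entry $(m,m)$ of $E^\theta_{n+1,m}$ equals $2$, which is why the threshold is $2$ there. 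Conversely, $b_{h,p}\geq1$ is precisely what makes $A=B-E^\theta_{h,p}+E^\theta_{h+1,p}\in\tTheta(n,r)$. Since $h\leq n<m$, the entries of $E^\theta_{h,p}$ never hit $(m,m)$.
\item \emph{Right side.} The threshold $\epsilon^\theta_{p,h+1}$ appears only when $h=n$, $p=m$, for the same reason as on the left.
\item \emph{Double counting of $p$ and $p^*$.} One must also confirm that the terms for $p$ and $p^*$ are not double counted. They yield the same matrix only when $E^\theta_{h,p}=E^\theta_{h,p^*}$, which cannot happen for $h\neq m$. So distinct $p$ give distinct $B$ in each sum.
\item \emph{Membership in $\tOmega(n,r)$.} Finally, check that the resulting subscript matrices lie in $\tOmega(n,r)$. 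This is a short computation of row and column sums, and otherwise the convention that such terms are zero takes care of it.
\end{itemize}
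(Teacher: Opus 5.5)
Your proposal is correct and is essentially the paper's argument: the paper omits the proof of Theorem~\ref{thm:centerSystemB}, deferring to the type $A$ proof of Theorem~\ref{thm:center-system}, and you carry out exactly that argument. That is, you show $e^\jmath_h c(x)$ and $c(x)e^\jmath_h$ are supported on $\Theta^{\jmath,\pm}_{n,r}(h)$, compare coefficients, and prove the converse via Lemma~\ref{lem:genB}, using that each $d^\jmath_i$ is a linear combination of the $[\diag(\widetilde\lambda)]$. Your extra bookkeeping (the threshold $\epsilon^\theta$ at the $(m,m)$ entry, distinctness of the terms for different $p$, and membership of the subscript matrices in $\tOmega(n,r)$) is accurate and fills in details the paper leaves implicit.
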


Move all terms in \eqref{eq:centerE_B} and \eqref{eq:centerF_B} to the left.  Use the columns indexed by $\tOmega(n,r)$ and the rows indexed by
$$\{(h,B,+)\mid 1\leq h\leq n,
  \ B\in\Theta^{\jmath,+}_{n,r}(h)\}
  \sqcup
  \{(h,B,-)\mid 1\leq h\leq n,
  \ B\in\Theta^{\jmath,-}_{n,r}(h)\}.$$
This gives an explicit matrix $\tM_{n,r}$.  Its entries are determined by \eqref{eq:betaB}, \eqref{eq:centerE_B} and \eqref{eq:centerF_B}.

\begin{thm}\label{cor:RREFbasisB}
Over $K$,
\begin{align}\label{eq:RREFbasisB}
    \ker_K \tM_{n,r}\cong Z(\qSj),
  \qquad
  x\longmapsto \sum_{A\in\tOmega(n,r)}x_A[A].
\end{align}
After reducing $\tM_{n,r}$ to reduced row echelon form $\rref(\tM_{n,r})$, setting one free variable equal to $1$ and the other free variables equal to $0$ gives a central basis $\mathcal{C}^{\jmath}_{n,r}:=\{C^{\jmath}_F\mid F\in \mathcal{F}_{n,r}^{\jmath}\}$, where $\mathcal{F}_{n,r}^{\jmath}$ is the free label set of $\rref(\tM_{n,r})$.
\end{thm}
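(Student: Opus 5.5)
The plan is to mirror the type $A$ argument of Theorems \ref{thm:center-system} and \ref{cor:rref-basis}, now using Theorem \ref{thm:centerSystemB} (taken as given, like Lemma \ref{lem:balancedB}). There are two assertions: the isomorphism \eqref{eq:RREFbasisB}, and the fact that the RREF construction yields a basis.

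For the isomorphism, consider the $K$-linear map $\Phi^\jmath:K^{\tOmega(n,r)}\to\qSj$, $x\mapsto\sum_{A\in\tOmega(n,r)}x_A[A]$.
\begin{itemize}
\item It is injective, since $\{[A]\mid A\in\tTheta(n,r)\}$ is a $K$-basis by Lemma \ref{lem:BLMbasisB} and $\tOmega(n,r)\subseteq\tTheta(n,r)$.
\item By construction, the rows of $\tM_{n,r}$ are exactly the equations \eqref{eq:centerE_B} and \eqref{eq:centerF_B}, with all terms moved to the left-hand side. So $x\in\ker_K\tM_{n,r}$ if and only if $x$ satisfies all of these equations, which by Theorem \ref{thm:centerSystemB} holds if and only if $\Phi^\jmath(x)\in Z(\qSj)$. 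Hence $\Phi^\jmath(\ker_K\tM_{n,r})\subseteq Z(\qSj)$.
\item Conversely, Lemma \ref{lem:balancedB} says every central element has the form $\sum_{A\in\tOmega(n,r)}x_A[A]$ for a unique $x$. By the previous point, this $x$ lies in $\ker_K\tM_{n,r}$, so the restriction of $\Phi^\jmath$ maps $\ker_K\tM_{n,r}$ onto $Z(\qSj)$.
\end{itemize}
Together these give the isomorphism \eqref{eq:RREFbasisB}.

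For the basis, write $R^\jmath=\rref(\tM_{n,r})=Y\tM_{n,r}$ with $Y$ invertible over $K$, so that $\ker R^\jmath=\ker\tM_{n,r}$. Split the column labels as $\tOmega(n,r)=\mathcal P^\jmath_{n,r}\sqcup\mathcal F^\jmath_{n,r}$ into pivot and free labels. Each pivot variable is then expressed uniquely as $x_P=-\sum_{F\in\mathcal F^\jmath_{n,r}}R^\jmath_{\rho(P),F}\,x_F$, exactly as in \eqref{eq:pivot-free-relation}.

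For each $F_0\in\mathcal F^\jmath_{n,r}$, the choice $x_F=\delta_{F,F_0}$ determines a unique vector $x^{(F_0)}$ in the kernel. These vectors are linearly independent, because the projection onto the free coordinates is the identity on them. They span the kernel, because any kernel vector $x$ equals $\sum_{F_0}x_{F_0}x^{(F_0)}$, both sides having the same free coordinates and the pivot coordinates being determined by the free ones. Applying $\Phi^\jmath$ then sends this basis to $\mathcal C^\jmath_{n,r}$, where
$$C^\jmath_{F_0}=[F_0]-\sum_{P\in\mathcal P^\jmath_{n,r}}R^\jmath_{\rho(P),F_0}[P].$$

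The only potential obstacle is making sure the row set of $\tM_{n,r}$ really encodes Theorem \ref{thm:centerSystemB}. In particular, subscript matrices lying outside $\tOmega(n,r)$ (or with negative entries) must contribute zero. I would check that, under the displayed inequality conditions for $B\in\Theta^{\jmath,\pm}_{n,r}(h)$, the subscript matrices are automatically balanced and symmetric. This uses $\row(E^\theta_{h,p})-\col(E^\theta_{h,p})=\mathbf e^\theta_h-\mathbf e^\theta_p$. Beyond this check, the argument is pure linear algebra.
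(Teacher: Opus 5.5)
Your proposal is correct and follows the route the paper intends. The paper gives no separate type $B$ proof, only deferring to the arguments of Theorems \ref{thm:center-system} and \ref{cor:rref-basis}; you reproduce those arguments with Theorem \ref{thm:centerSystemB} and Lemma \ref{lem:balancedB} in place of their type $A$ counterparts, and your extra check that the subscript matrices are automatically balanced (the paper instead treats out-of-range terms as zero by convention) goes through as you describe.
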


\subsection{Centers transported from type $A$}

Let
$$f_r^B(q)=\prod_{i=1-r}^{r-1}(q+q^{-i})\in K, \qquad q=\upsilon^2.$$
This element is nonzero in $K$ and hence invertible.  By Lai, Nakano and Xiang \cite[Section 3]{LNX22}, there is a $K$-algebra isomorphism
\begin{equation}\label{eq:LNXiso}
  \Psi:\qSj\xrightarrow{\ \sim\ }
  \bigoplus_{i=0}^{r}\boldsymbol{\mathcal{S}}_q(n+1,i)\otimes_K \boldsymbol{\mathcal{S}}_q(n,r-i).
\end{equation}

\begin{thm}\label{thm:centerBviaIso}
Over $K$, the isomorphism \eqref{eq:LNXiso} restricts to
\begin{equation}\label{eq:centerIsoB}
  Z(\qSj)\xrightarrow{\ \sim\ }
  \bigoplus_{i=0}^{r} Z(\boldsymbol{\mathcal S}_q(n+1,i))\otimes Z(\boldsymbol{\mathcal S}_q(n,r-i)).
\end{equation}
Moreover, recalling the RREF basis of type $A$ in Theorem \ref{cor:rref-basis}, the following set
\begin{equation}\label{eq:basisBviaIso}
  \bigcup_{i=0}^{r}
  \left\{
  \Psi^{-1}\left(C^{(n+1,i)}_{\alpha}\otimes C^{(n,r-i)}_{\beta}\right)
  \ \middle|\
  \alpha\in\mathcal F_{n+1,i},\ \beta\in\mathcal F_{n,r-i}
  \right\}
\end{equation}
forms a $K$-basis of $Z(\qSj)$. In particular,
\[
  \dim_K Z(\qSj)=
  \sum_{i=0}^{r}|\Lambda^+(n+1,i)|\,|\Lambda^+(n,r-i)|.
\]
\end{thm}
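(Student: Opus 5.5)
The plan is to transport centers along the algebra isomorphism $\Psi$ of \eqref{eq:LNXiso} and then apply Lemma \ref{lem:centerDirectTensor} twice. First, any $K$-algebra isomorphism $\Psi:\mathcal B\to\mathcal B'$ maps $Z(\mathcal B)$ bijectively onto $Z(\mathcal B')$. Indeed, if $z$ commutes with every $b\in\mathcal B$, then $\Psi(z)$ commutes with every $\Psi(b)$, and these exhaust $\mathcal B'$; applying the same argument to $\Psi^{-1}$ gives the reverse inclusion. Hence $\Psi$ restricts to a $K$-linear isomorphism
\[
Z(\qSj)\xrightarrow{\ \sim\ } Z\Big(\bigoplus_{i=0}^{r}\boldsymbol{\mathcal{S}}_q(n+1,i)\otimes_K \boldsymbol{\mathcal{S}}_q(n,r-i)\Big).
\]
Since every $q$-Schur algebra involved is finite-dimensional over $K$, Lemma \ref{lem:centerDirectTensor} first splits the center over the direct sum. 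It then identifies the center of each summand with $Z(\boldsymbol{\mathcal S}_q(n+1,i))\otimes_K Z(\boldsymbol{\mathcal S}_q(n,r-i))$. Composing these identifications gives \eqref{eq:centerIsoB}.

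For the basis statement, Theorem \ref{cor:rref-basis} applied to $(n+1,i)$ gives $\{C^{(n+1,i)}_\alpha\mid \alpha\in\mathcal F_{n+1,i}\}$ as a $K$-basis of $Z(\boldsymbol{\mathcal S}_q(n+1,i))$. Applied to $(n,r-i)$, it gives $\{C^{(n,r-i)}_\beta\}$ as a $K$-basis of $Z(\boldsymbol{\mathcal S}_q(n,r-i))$. Pure tensors of basis vectors form a basis of a tensor product of vector spaces, and the union of bases of the summands is a basis of a direct sum. Here each pure tensor is regarded inside the $i$-th summand of the direct sum. It follows that the elements $C^{(n+1,i)}_\alpha\otimes C^{(n,r-i)}_\beta$, taken over all $i$, form a basis of the right-hand side of \eqref{eq:centerIsoB}. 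Applying the inverse isomorphism $\Psi^{-1}$ sends this basis to the set \eqref{eq:basisBviaIso}, which is therefore a basis of $Z(\qSj)$.

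The dimension formula follows from counting this basis. By Theorem \ref{thm:center-dimension}, $|\mathcal F_{n+1,i}|=\dim_K Z(\boldsymbol{\mathcal S}_q(n+1,i))=|\Lambda^+(n+1,i)|$, and similarly $|\mathcal F_{n,r-i}|=|\Lambda^+(n,r-i)|$. Summing the products over $i$ gives the stated expression. For the degenerate cases $i=0$ and $r-i=0$, I will use the convention $\boldsymbol{\mathcal S}_q(k,0)\cong K$, whose center is $K$ and for which $|\Lambda^+(k,0)|=1$. This convention is consistent with Theorem \ref{thm:center-dimension}, since $\Omega(k,0)$ consists only of the zero matrix and there are no equations.

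The only point needing care is that Lemma \ref{lem:centerDirectTensor} is being applied over the field $K$ to finite-dimensional algebras. In particular, the tensor-product formula $Z(A\otimes_K B)=Z(A)\otimes_K Z(B)$ must be justified. I would verify it by choosing a basis $\{b_j\}$ of $B$ and writing a central element as $\sum_j a_j\otimes b_j$. Commuting with $a\otimes 1$ for all $a\in A$ forces every $a_j\in Z(A)$. Then, taking a basis $\{z_k\}$ of $Z(A)$ and rewriting the element as $\sum_k z_k\otimes b'_k$, commuting with $1\otimes b$ for all $b\in B$ forces every $b'_k\in Z(B)$. This is routine, so I expect no real obstacle. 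The substance of the theorem lies entirely in the Lai--Nakano--Xiang isomorphism, which we take as given.
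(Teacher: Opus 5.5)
Your proposal is correct and matches the paper's proof: transport the center along $\Psi$, split it using Lemma \ref{lem:centerDirectTensor}, tensor the type $A$ RREF bases within each summand, pull back by $\Psi^{-1}$, and count using Theorem \ref{thm:center-dimension}. Your verification of $Z(A\otimes_K B)=Z(A)\otimes_K Z(B)$ and your handling of the degenerate summands $i=0$ and $r-i=0$ just fill in details the paper leaves implicit.
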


\begin{proof}
An algebra isomorphism maps centers isomorphically onto centers.  Applying Lemma \ref{lem:centerDirectTensor} to the right hand side of \eqref{eq:LNXiso} gives \eqref{eq:centerIsoB}. Theorem \ref{cor:rref-basis} gives a basis of each type $A$ center.  Tensoring these bases inside each direct summand and pulling them back by $\Psi^{-1}$ gives \eqref{eq:basisBviaIso}.  The dimension formula follows from Theorem \ref{thm:center-dimension}.
\end{proof}

\begin{cor}\label{cor:primitiveB}
The primitive central idempotents of $\qSj$ are parametrized by bipartitions $(\lambda,\mu)$ such that
\[
  |\lambda|+|\mu|=r,
  \qquad
  \ell(\lambda)\leq n+1,
  \qquad
  \ell(\mu)\leq n.
\]
They are
\begin{equation}\label{eq:primitiveB}
  z_{(\lambda,\mu)}=\Psi^{-1}(z_{\lambda}\otimes z_{\mu}),
\end{equation}
where $z_{\lambda}\in Z(\boldsymbol{\mathcal S}_q(n+1,|\lambda|))$ and $z_{\mu}\in Z(\boldsymbol{\mathcal S}_q(n,|\mu|))$ are the type $A$ primitive central idempotents in Proposition \ref{prop:primitive-idempotents}.
\end{cor}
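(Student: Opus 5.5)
The plan is to transport everything through the isomorphism $\Psi$ of \eqref{eq:LNXiso} and reduce to type $A$ via Proposition \ref{prop:primitive-idempotents}. The first step is the decomposition of centers. Since $\Psi$ is a $K$-algebra isomorphism, it restricts to an isomorphism of centers, and Theorem \ref{thm:centerBviaIso} identifies $Z(\qSj)$ with $\bigoplus_{i} Z(\boldsymbol{\mathcal S}_q(n+1,i))\otimes Z(\boldsymbol{\mathcal S}_q(n,r-i))$. An algebra isomorphism also carries primitive central idempotents to primitive central idempotents: idempotency, centrality, orthogonality and primitivity (being nonzero and not a sum of two nonzero orthogonal central idempotents) are all preserved. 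It therefore suffices to determine the primitive central idempotents of the right-hand side.

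The second step handles the direct sum. Let $1_i$ denote the identity of the $i$-th summand. The central idempotents of $\bigoplus_i B_i$ are exactly the tuples of central idempotents of the $B_i$. Hence every primitive one is supported in a single summand $B_i$ and is primitive there. So we reduce to a single tensor product $B_i=\boldsymbol{\mathcal S}_q(n+1,i)\otimes_K\boldsymbol{\mathcal S}_q(n,r-i)$.

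The third step handles the tensor product. By Proposition \ref{prop:primitive-idempotents}, $\{z_\lambda\}_{\lambda\in\Lambda^+(n+1,i)}$ and $\{z_\mu\}_{\mu\in\Lambda^+(n,r-i)}$ are complete families of pairwise orthogonal primitive central idempotents, and they are $K$-bases of the respective centers. By Lemma \ref{lem:centerDirectTensor}, $Z(B_i)=Z(\boldsymbol{\mathcal S}_q(n+1,i))\otimes Z(\boldsymbol{\mathcal S}_q(n,r-i))$. This is isomorphic as an algebra to $K^{a}\otimes K^{b}\cong K^{ab}$, whose coordinate idempotents are exactly the $z_\lambda\otimes z_\mu$. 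In a split commutative semisimple algebra $K^{ab}$, the primitive idempotents are precisely these coordinate vectors. So the primitive central idempotents of $B_i$ are $z_\lambda\otimes z_\mu$, they are pairwise orthogonal, and they sum to $1_i$. Applying $\Psi^{-1}$ gives \eqref{eq:primitiveB}, and the $z_{(\lambda,\mu)}$ sum to $1$.

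The last step is the parametrization, which I expect to be the only point needing care. A partition $\lambda\in\Lambda^+(n+1,i)$ is the same as a partition of $i$ with $\ell(\lambda)\le n+1$, after discarding trailing zeros. Likewise $\mu\in\Lambda^+(n,r-i)$ corresponds to a partition of $r-i$ with $\ell(\mu)\le n$. Letting $i$ range over $0,\dots,r$, the pairs $(\lambda,\mu)$ are precisely the bipartitions with $|\lambda|+|\mu|=r$ and these length bounds. Distinct pairs give distinct idempotents, either because they lie in different summands or because they are distinct coordinate idempotents in the same summand. The count agrees with the dimension formula of Theorem \ref{thm:centerBviaIso}.
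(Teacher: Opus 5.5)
Your proposal is correct and follows the route the paper intends. The paper states this corollary without a separate proof, as an immediate consequence of Theorem~\ref{thm:centerBviaIso}: transport through $\Psi$, split the center via Lemma~\ref{lem:centerDirectTensor}, and use the type $A$ primitive central idempotents of Proposition~\ref{prop:primitive-idempotents}. Your identification of $Z(B_i)$ with $K^{ab}$, whose coordinate idempotents are the $z_\lambda\otimes z_\mu$, simply makes explicit the step the paper leaves to the reader.
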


\begin{cor}
    Over $K$, we have
    $$\ker_K \tM_{n,r}\cong
  \bigoplus_{i=0}^{r}\ker_K M_{n+1,i}\otimes\ker_K M_{n,r-i}.$$
\end{cor}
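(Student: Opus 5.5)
The isomorphism is obtained by composing known identifications. By \eqref{eq:RREFbasisB} in Theorem \ref{cor:RREFbasisB}, the map $x\mapsto\sum_{A\in\tOmega(n,r)}x_A[A]$ is a $K$-linear isomorphism $\ker_K\tM_{n,r}\cong Z(\qSj)$. Likewise, for each $0\le i\le r$, the map $\Phi$ of \eqref{eq:kernel-center-isomorphism} gives isomorphisms $\Phi_{n+1,i}:\ker_K M_{n+1,i}\cong Z(\boldsymbol{\mathcal S}_q(n+1,i))$ and $\Phi_{n,r-i}:\ker_K M_{n,r-i}\cong Z(\boldsymbol{\mathcal S}_q(n,r-i))$. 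Tensoring these and summing over $i$ gives a $K$-linear isomorphism
\[
\bigoplus_{i=0}^{r}\ker_K M_{n+1,i}\otimes_K\ker_K M_{n,r-i}\;\xrightarrow{\ \sim\ }\;\bigoplus_{i=0}^{r}Z(\boldsymbol{\mathcal S}_q(n+1,i))\otimes_K Z(\boldsymbol{\mathcal S}_q(n,r-i)),
\]
since the tensor product over a field of two isomorphisms of finite-dimensional spaces is an isomorphism, and so is a direct sum of isomorphisms.

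Next we apply Theorem \ref{thm:centerBviaIso}. The restriction of $\Psi$ in \eqref{eq:centerIsoB} identifies the right-hand side with $Z(\qSj)$. Composing with the inverse of the type $B$ identification yields the explicit isomorphism
\[
\ker_K\tM_{n,r}\longrightarrow\bigoplus_{i}\ker_K M_{n+1,i}\otimes\ker_K M_{n,r-i},\qquad x\longmapsto \bigoplus_i(\Phi_{n+1,i}\otimes\Phi_{n,r-i})^{-1}\Bigl(\Psi\bigl(\textstyle\sum_A x_A[A]\bigr)_i\Bigr),
\]
where the subscript $i$ denotes the component in the $i$-th summand. As a sanity check, the dimensions agree: both sides have dimension $\sum_i|\Lambda^+(n+1,i)|\,|\Lambda^+(n,r-i)|$, by Theorem \ref{thm:center-dimension} and the dimension formula in Theorem \ref{thm:centerBviaIso}.

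There is no real obstacle here, since every step is a composition of isomorphisms already established. The only care needed is to make clear that the statement is an abstract $K$-linear isomorphism. It is not claimed to be induced by any map between the index sets $\tOmega(n,r)$ and $\Omega(n+1,i)\times\Omega(n,r-i)$: the map $\Psi$ from \cite{LNX22} need not send normalized BLM basis elements to tensors of BLM basis elements. A more refined, coordinate-level compatibility of this kind would require the explicit form of $\Psi$, and we would not attempt it here.
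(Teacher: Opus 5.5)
Your proposal is correct and follows the paper's argument: it composes the type $B$ identification $\ker_K\tM_{n,r}\cong Z(\qSj)$ from \eqref{eq:RREFbasisB}, the tensored and summed type $A$ isomorphisms $\Phi_{n+1,i}$ and $\Phi_{n,r-i}$ from \eqref{eq:kernel-center-isomorphism}, and the restriction of $\Psi$ to centers in Theorem \ref{thm:centerBviaIso}. Your remark that the result is an abstract $K$-linear isomorphism, not one induced on BLM index sets, is also accurate.
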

\begin{proof}
    It follows from Theorem \ref{cor:RREFbasisB}, \eqref{eq:kernel-center-isomorphism} and Theorem \ref{thm:centerBviaIso}.
\end{proof}
\begin{rmk} 
    There is an $\imath$-variant $\qSi$ of $\qSj$ (see \cite{BKLW18,LL21,DW22b,LNX22}). A similar linear system method can be applied to its center. We do not pursue this variant here. In addition, we have, 
    $$\qSi\cong e\qSj e,\quad\text{where}\quad e=\sum_{\lambda\in\Lambda(n+1,r)\,:\,\lambda_{n+1}=0}[\diag(\widetilde{\lambda})].$$
    (The explicit algebra isomorphism above is given in \cite{DW22b}). Then
    \begin{align}\label{eq:SieSje}
        Z(\qSi)\cong Z(e\qSj e)= eZ(\qSj)e.
    \end{align}
    We give here a short proof for the last equality in \eqref{eq:SieSje}. Since $\qSj$ is a finite-dimensional split semisimple $K$-algebra, we have, by the Wedderburn decomposition,
    $$\qSj\cong\bigoplus_{\gamma\in\Gamma}\Mat_{d_\gamma}(K)$$
    for a finite index set $\Gamma$.  Write $e=(e_\gamma)_{\gamma\in\Gamma}$ under this decomposition, and put $r_\gamma=\rank(e_\gamma)$.  For every $\gamma$,
    $$e_\gamma \Mat_{d_\gamma}(K)e_\gamma
   \cong
   \begin{cases}
      \Mat_{r_\gamma}(K),&r_\gamma>0,\\
      0,&r_\gamma=0.
   \end{cases}$$
   It follows that
   $$Z(e\qSj e)=\bigoplus_{\substack{\gamma\in\Gamma,\, r_\gamma>0}}K e_\gamma.$$
   On the other hand,
   $$Z(\qSj)=\bigoplus_{\gamma\in\Gamma}K1_\gamma,$$
where $1_\gamma$ is the identity of the $\gamma$-th matrix block, and therefore
$$eZ(\qSj)e
   =\bigoplus_{\substack{\gamma\in\Gamma,\, r_\gamma>0}}K e_\gamma
   =Z(e\qSj e).$$

\end{rmk}


\end{document}